\documentclass[a4,12pt]{article}

\usepackage{amsmath,amsfonts,amsthm,latexsym,amssymb,mathrsfs}
\usepackage{eucal}

\textwidth15.5truecm
\textheight22truecm
\hoffset-15mm

\def\R{\mathcal{R}}
\def\B{\mathcal{B}}
\def\A{\mathcal{A}}
\def\K{\mathcal{K}}
\newtheorem{df}{Definition}[section]
\newtheorem{thm}[df]{Theorem}
\newtheorem{cor}[df]{Corollary}
\newtheorem{rema}[df] {Remark}
\newtheorem{lem}[df] {Lemma}

\newcommand{\mmm}[4]
{\left[ \begin{array}{cc} #1 & #2 \\ #3 & #4 \end{array} \right]}

\begin{document}

\title{\bf The inverse along an element in \\ rings  with  an involution, Banach \\ algebras
and $C^*$-algebras}
\author{Julio  Ben\'{\i}tez and Enrico \rm Boasso}\date{ }

\maketitle

\begin{abstract}\noindent Properties of the inverse along an element in rings with an involution, 
	Banach algebras and $C^*$-alegbras will be studied unifying known expressions concerning
	generalized inverses.\par
\medskip	
\noindent {\bf Keywords:} Invertibility along an element; Rings with an involution; Banach algebras; 
	$C^*$-algebras\par
\medskip
\noindent {\bf AMS Subjects Classifications:} 15A09; 16W10; 46H99; 46L99
\end{abstract}

\section{Introduction}

There exist many specific generalized inverses in the literature, such as the group inverse,
the Drazin inverse and the Moore-Penrose inverse. 
Recently  X. Mary have unified  these different notions of invertibility in \cite{mary} by introducing a new type 
of outer inverse. Furthermore, several authors have studied this new outer inverse (see \cite{mary, M2, mary1, marybis, KM, ZCP, ZPC}).

Some properties of this latter pseudoinverse were studied in \cite{bb} in the setting
of rings. In this article further properties will be studied enlarging the underlying set.
Specifically, rings with an involution, Banach algebras,
and $C^*$-algebras will be consider. The main objective of this article is to study some properties of the Mary inverse such as 
limits, representations and continuity and the relationship between the aforementioned inverse and the weighted Moore-Penrose inverse.

\section{Preliminary definitions and  facts}

From now on, $\R$ will denote a unitary ring with unity $1$. Let $\R^{-1}$ be the set of
invertible elements of $\R$. Given $a \in \R$, we define
the {\em image ideals} by $a \R = \{ ax: x \in \R \}$ and $\R a = \{ xa : x \in \R \}$, and 
the {\em kernel ideals} by $a^{-1}(0) = \{ x \in \R: ax=0 \}$ and $a_{-1}(0) = \{ x \in \R: xa=0 \}$.

An element $a \in \R$ is said to be {\em regular} if there exists $x \in \R$ such that $a=axa$.
The element $x$, which is not uniquely determined by $a$, will be said to be an {\em inner inverse}
of $a$. The set of regular elements of $\R$ will be denoted by $\widehat{\R}$.
If $y \in \R$ satisfies $yay=y$, then it will be said that $y$ is an {\em outer inverse} of $a$.

Next the definition of the key notion of this article will be recalled.

\begin{df}\label{inversa_de_mary}
Let $\R$ be a ring with unity and consider $a,d \in \R$. The element $a$
is said to be {\em invertible along} $d$ if, there exists $b \in \R$ such that
$bab=b$ and $b\R=d\R$, $\R b = \R d$. 
\end{df}

In the conditions of Defintion \ref{inversa_de_mary}, according to
\cite[Theorem 6]{mary}, if such $b$ exists, then it is unique. This element $b$ satisfying
the conditions of Defintion \ref{inversa_de_mary} will be said to be
{\em the inverse of $a$ along $d$} and it will be denoted by $a^{\parallel d}$. 
Moreover, according to \cite[p. 3]{marybis}, if $a^{\parallel d}$ exists, then $d$ is regular. 

Note that according to \cite[Theorem 3.3]{bb}, if $a$ is invertible
along $d$, then $d_{-1}(0) = b_{-1}(0)$ and 
$d^{-1}(0) = b^{-1}(0)$, where $b=a^{\parallel d}$. Hence, from
$bab=b$, it can be easily proved that
\begin{equation}\label{bad}
a^{\parallel d} a d = d = da a^{\parallel d}.
\end{equation}

\noindent In addition, a straightforward calculation proves that $a \in \R$ is invertible along $1$ if and only if $a\in\R^{-1}$.

Next follow the definitions of several classical pseudoinverses which are particular cases
of the inverse along an element.

The element $a \in \R$ is said to be {\em group invertible}, if there exists
$b \in \R$ such that
$$
aba=a, \qquad bab=b, \qquad ab=ba.
$$
If $a$ is group invertible, then such element $b$ it is unique and it is customary written 
$a^\#$. According to \cite[Theorem 11]{mary}, $a$ is group invertible if and only
if $a$ in invertible along $a$. The set of all group invertible elements of $\R$ will be denoted by $\R^\#$.

The element $a \in \R$ is said to be {\em Drazin invertible}, if there exists
$b \in \R$ such that
$$
a^mba=a, \qquad bab=b, \qquad ab=ba,
$$
for some $m \in \mathbb N$. 
If $a$ is Drazin invertible, then such element $b$ is unique and it is customary written 
$a^{d}$. According to \cite[Theorem 11]{mary}, $a$ is Drazin invertible if and only
if $a$ in invertible along $a^k$ for some $k \in \mathbb N$.
The set of all Drazin invertible elements of $\R$ will be denoted by $\R^d$.

In this paragraph it will be assumed that $\R$ has an involution. Recall that
an involution $*:\R \to \R$ is an operation that satisfies 
$$
(a+b)^* = a^*+b^*, \qquad (ab)^*=b^*a^*, \qquad (a^*)^* = a,
$$
for all $a, b\in \R$. The element $a \in \R$ is 
said to be {\em Moore-Penrose invertible}, if there exists
$b \in \R$ such that
$$
aba=a, \qquad bab=b, \qquad (ab)^*=ab, \qquad (ba)^*=ba.
$$
If $b$ is Moore-Penrose invertible, then such element $b$ is unique and it is customary written 
$a^\dag$. According to \cite[Theorem 11]{mary}, $a$ is Moore-Penrose invertible if and only
if $a$ in invertible along $a^*$.
The set of all Moore-Penrose invertible elements of $\R$ will be denoted by $\R^\dag$.

Finally, recall that if $p \in \R$ is an idempotent (i.e., $p^2=p$), then it is easy to prove that 
$p \R p$ is a subring of $\R$ whose unity is $p$. If $\mathcal{S}$ is a subring
of $\R$ and $x \in \mathcal{S}^{-1}$, then $(x)^{-1}_\mathcal{S}$ will denote the inverse of $x$ in the subring $\mathcal{S}$. 

\section{Rings with an involution}

Let $\R$ be a unitary ring with an involution. 
An element $a \in \R$ is said to be Hermitian if $a^*=a$.
Evidently, if $a\in \R^{-1}$, then $a^* \in \R^{-1}$ and 
$(a^*)^{-1} = (a^{-1})^*$. 

According to Definition \ref{inversa_de_mary}, the next result is obvious.

\begin{rema}\label{remark1}
	Let $\R$ be a unitary ring with an involution and $a,d \in \R$. Then
	$a$ is invertible along $d$ if and only if $a^*$ is invertible along $d^*$. In this
	case, $(a^{\parallel d})^* = (a^*)^{\parallel d^*}$.
\end{rema}

The former result encloses the following (\cite[Theorem 11]{mary}). If $a \in \R$, then 
\begin{itemize}
	\item[(i)] $a \in \R^\# \hbox{ \rm if and only if } a^* \in \R^\#$. In this case, $(a^\#)^* = (a^*)^\#$.
            \item[(ii)]  $a \in \R^d  \hbox{ \rm if and only if } a^* \in \R^d$. In this case, $(a^d)^* = (a^*)^d$.
	\item[(iii)] $a \in \R^\dag  \hbox{ \rm if and only if } a^* \in \R^\dag$. In this case, $(a^\dag)^* = (a^*)^\dag$.
\end{itemize}

To the best knowledge of the authors, the weighted Moore-Penrose inverse is a generalised inverse which has not been linked 
to the inverse along an element yet. Next the relationship between the aforementioned inverses will be study. In first place,
the definition of the weighted Moore-Penrose inverse will be recalled. \par

Let $m$ and  $n \in \R$ be invertible and Hermitian. Then given $a \in \R$, the set of elements $x \in \R$ such that
\begin{equation}\label{mp_con_pesos}
axa=a, \qquad xax=x, \qquad (max)^*=max, \qquad (nxa)^*=nxa
\end{equation}
is empty or a singleton. In order to show that 
no extra hypotheses on $m$ and $n$ are necessary, the proof of the uniqueness will be given. If $x$ and $y$ satisfies
(\ref{mp_con_pesos}), then 
$$max = mayax=(may)m^{-1}(max),$$
 which, by taking $*$, implies that
$$max = (max)m^{-1}(may) = maxay = may.$$ 
Hence $ax=ay$. In a similar way it is possible to prove that
$xa=ya$. However, $x=xax=xay=yay=y$.  
When the set under consideration is a singleton, 
$a$ will be said to be {\em weighted Moore-Penrose invertible relative to $m$ and $n$} and the unique element
satisfying (\ref{mp_con_pesos}) will be said to be the  {\em weighted Moore-Penrose inverse of $a$ relative to $m$ and $n$};
in addition, it will be denoted by $a^\dag _{m,n}$.

In order to link the invertibility along an element and the weighted Moore-Penrose inverse, 
some extra assumptions on $m$ and $n$ are needed, namely, the positivity. Next this notion will be recalled.\par
The element $x \in \R$ will be said to be {\em positive}, if there exists
a Hermitian $y \in \R$ such that $x=y^2$. In this case, the element  $y$ will be said to be a {\em square root} of $x$. \par	

Observe that in a $C^*$-algebra, 
every positive element has a unique square root. For arbitrary rings, this is not true. Take
$\R = \mathbb{Z}_6$. Since $\R$ is commutative, then $\R$ has an involution, namely, the 
identity, 
and therefore any element in $\mathbb{Z}_6$ is Hermitian.
In addition, $[2]^2 = [4]$ and $[4]^2 = [4]$, which implies that $[4]$ has two square
roots. 

If $\R$ is a ring with an involution, $x \in \R$ is positive, and $y$ a square root
of $x$, then it is easy to see that $x$ is Hermitian and if $x$ is invertible, then 
$y$ is also invertible. In fact, since $y$ is Hermitian by definition,  $x^* = (y^2)^* = 
(y^*)^2 = y^2=x$. In addition, since $x=y^2$, $xy=y^3=yx$. Now, since
$x\in\R^{-1}$, $x^{-1}y=yx^{-1}$. However, since
$1=xx^{-1} = y(yx^{-1})$ and $1=x^{-1}x = (x^{-1}y)y$, $y$ is invertible.

In the following theorem the relationship between the inverse along an element and the weighted Moore-Penrose
inverse will be presented.

\begin{thm}\label{thm10000}
Let $\R$ be a unitary ring with an involution and consider  $a \in \R$
and two invertible and positive element $m,n \in \R$. The following 
statements are equivalent.
\begin{itemize}
\item[{\rm (i)}] $a$ is weighted Moore-Penrose invertible relative to $m$ and $n$.
\item[{\rm (ii)}] $a$ is invertible along $n^{-1}a^*m$.
\end{itemize}
Furthermore, in this case, $a^{\parallel n^{-1}a^*m} = a^\dag _{m,n}$.
\end{thm}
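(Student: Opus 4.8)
The plan is to reduce the weighted statement to the ordinary Moore--Penrose case, where \cite[Theorem 11]{mary} already provides the equivalence ``$c$ is Moore--Penrose invertible $\iff$ $c$ is invertible along $c^*$'', together with $c^{\parallel c^*}=c^\dag$. The bridge will be a change of variables by Hermitian square roots of the weights. Since $m,n$ are invertible and positive, I would fix Hermitian $s,t\in\R$ with $m=s^2$, $n=t^2$; by the fact recalled just before the theorem these are invertible (with Hermitian inverses), which is all that is needed, and their possible non-uniqueness is harmless because the statement to be proved mentions neither $s$ nor $t$. I would then work with the element $\hat a:=sat^{-1}$, for which $\hat a^*=t^{-1}a^*s$.

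First I would establish an auxiliary lemma on the inverse along an element under two-sided multiplication by units: for $u,v\in\R^{-1}$ and $c,e\in\R$, the element $c$ is invertible along $e$ if and only if $ucv$ is invertible along $v^{-1}eu^{-1}$, and in that case $(ucv)^{\parallel v^{-1}eu^{-1}}=v^{-1}(c^{\parallel e})u^{-1}$. This is immediate from Definition \ref{inversa_de_mary}: writing $b=c^{\parallel e}$, the element $b':=v^{-1}bu^{-1}$ satisfies $b'(ucv)b'=v^{-1}(bcb)u^{-1}=b'$, and since left (resp.\ right) multiplication by a unit does not alter a right (resp.\ left) ideal, $b'\R=v^{-1}(b\R)=v^{-1}(e\R)=(v^{-1}eu^{-1})\R$ and $\R b'=(\R b)u^{-1}=(\R e)u^{-1}=\R(v^{-1}eu^{-1})$; hence $b'=(ucv)^{\parallel v^{-1}eu^{-1}}$. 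The converse is the same assertion applied to $ucv,\,v^{-1}eu^{-1},\,u^{-1},\,v^{-1}$.

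Next I would verify the elementary dictionary between the weighted equations (\ref{mp_con_pesos}) for $a$ relative to $m,n$ and the ordinary Moore--Penrose equations for $\hat a$: for $x\in\R$, $x$ satisfies (\ref{mp_con_pesos}) if and only if $\hat x:=txs^{-1}$ satisfies $\hat a\hat x\hat a=\hat a$, $\hat x\hat a\hat x=\hat x$, $(\hat a\hat x)^*=\hat a\hat x$, $(\hat x\hat a)^*=\hat x\hat a$. The first two identities transfer after cancelling the units $s^{\pm1},t^{\pm1}$. For the symmetry conditions one computes $\hat a\hat x=saxs^{-1}$ and $\hat x\hat a=txat^{-1}$, and uses $s^*=s$, $t^*=t$, $m^*=m$, $n^*=n$ to see that $(\hat a\hat x)^*=\hat a\hat x$ is equivalent to $max=(max)^*$ and $(\hat x\hat a)^*=\hat x\hat a$ is equivalent to $nxa=(nxa)^*$. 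Consequently $a$ is weighted Moore--Penrose invertible relative to $m,n$ if and only if $\hat a\in\R^\dag$, and then $a^\dag_{m,n}=t^{-1}(\hat a^\dag)s$.

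Finally I would assemble the chain. Since $\hat a^*=t^{-1}a^*s=t\,(n^{-1}a^*m)\,s^{-1}$, applying the auxiliary lemma with $u=s$, $v=t^{-1}$, $c=a$, $e=n^{-1}a^*m$ (so that $ucv=\hat a$ and $v^{-1}eu^{-1}=\hat a^*$) shows that $a$ is invertible along $n^{-1}a^*m$ if and only if $\hat a$ is invertible along $\hat a^*$, with $\hat a^{\parallel\hat a^*}=t\,(a^{\parallel n^{-1}a^*m})\,s^{-1}$. Combining this with \cite[Theorem 11]{mary} ($\hat a\in\R^\dag\iff\hat a$ invertible along $\hat a^*$, and then $\hat a^{\parallel\hat a^*}=\hat a^\dag$) and the dictionary above gives (i)$\iff$(ii); and equating the two expressions for $\hat a^{\parallel\hat a^*}$, namely $t\,(a^{\parallel n^{-1}a^*m})\,s^{-1}=\hat a^\dag=t\,a^\dag_{m,n}\,s^{-1}$, yields $a^{\parallel n^{-1}a^*m}=a^\dag_{m,n}$. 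I do not expect a real obstacle; the only point requiring care is matching the asymmetry of $n^{-1}a^*m$ (weight $n^{-1}$ on the left, $m$ on the right) with the correct placement of the conjugating units, which is precisely why the auxiliary lemma is stated with independent $u$ and $v$.
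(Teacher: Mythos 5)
Your proposal is correct, but it follows a genuinely different route from the paper. The paper argues directly from the definitions: for (i)$\Rightarrow$(ii) it takes $x=a^\dag_{m,n}$ and verifies by hand the ideal equalities $x\R=n^{-1}a^*m\,\R$ and $\R x=\R\, n^{-1}a^*m$ from the weighted equations (the outer-inverse identity $xax=x$ being part of those equations), and for (ii)$\Rightarrow$(i) it starts from $y=a^{\parallel n^{-1}a^*m}$, extracts from $ya\,n^{-1}a^*m=n^{-1}a^*m=n^{-1}a^*m\,ay$ the two relations in (\ref{aux1}), and only there introduces Hermitian square roots $p,q$ of $m,n$ to show that $may$ and $nya$ are Hermitian and that $aya=a$. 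You instead reduce the whole weighted problem to the unweighted one by the congruence $\hat a=sat^{-1}$ (with $m=s^2$, $n=t^2$), using two ingredients: a unit-conjugation lemma for the inverse along an element, which is essentially the paper's own Theorem \ref{th28j} (stated and proved later, independently of Theorem \ref{thm10000}, so there is no circularity, and your direct verification of it via uniqueness is correct), and the external equivalence of \cite[Theorem 11]{mary} between Moore--Penrose invertibility of $\hat a$ and invertibility along $\hat a^*$. Your dictionary between the weighted equations for $(a;m,n)$ and the ordinary Moore--Penrose equations for $\hat a$ checks out (the key computations $\hat a\hat x=saxs^{-1}$, $\hat x\hat a=txat^{-1}$, and $v^{-1}eu^{-1}=t\,n^{-1}a^*m\,s^{-1}=t^{-1}a^*s=\hat a^*$ are all right), and cancelling the units at the end gives $a^{\parallel n^{-1}a^*m}=a^\dag_{m,n}$ exactly as claimed. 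What your approach buys is modularity and a conceptual explanation (the weighted Moore--Penrose inverse is the ordinary one after conjugating by square roots of the weights), at the cost of invoking Theorem \ref{th28j}-type transport and Mary's Theorem 11; the paper's computation is longer but self-contained at that point of the text and needs the square roots only in one direction.
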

\begin{proof} Suppose that statement (i) holds and denote $x = a^\dag _{m,n}$. Since
$$
x=xax=n^{-1}nxax = n^{-1}(nxa)^*x = n^{-1}a^* mm^{-1}x^* n^* x, 
$$
$x\R \subset n^{-1} a^* m \R$. Observe now that 
$(nxa)^* = nxa$ implies $n^{-1}a^* x^* = xan^{-1}$ ($n$ is invertible 
and Hermitian). Hence
$$
n^{-1}a^*m = n^{-1} (axa)^* m = n^{-1} a^* x^* a^* m = xan^{-1} a^* m,
$$
which leads to $n^{-1} a^* m \R \subset x \R$.
Thus, it has been proved that $n^{-1} a^* m \R = x \R$. The proof of 
$\R n^{-1} a^* m = \R x$ follows from 
$$
x = xax = xm^{-1}max = xm^{-1}(max)^* = xm^{-1}x^*nn^{-1}a^*m 
$$
and 
$$
n^{-1}a^*m = n^{-1}(axa)^*m = n^{-1}a^*x^*a^*m = n^{-1}a^*(max)^* = n^{-1}a^*max.
$$

Now suppose that statement (ii) holds and denote $y = a^{\parallel n^{-1}a^*m}$.
According to Definition \ref{inversa_de_mary},  $yay=y$. By (\ref{bad}),
$yan^{-1}a^*m = n^{-1}a^*m = n^{-1}a^*m ay$, or equivalently, 
\begin{equation}\label{aux1}
yan^{-1}a^* = n^{-1}a^*, \qquad a^*m = a^*m ay.
\end{equation}
Let $p, q \in \R$ be square roots of $m$ and $n$, respectively. Observe that
by definition, $p$ and $q$ are Hermitian. Furthermore, 
$p$ and $q$ are invertible since $m$ and $n$ are invertible.
Note that from the second equality of (\ref{aux1}),  
$a^* p = a^*may p^{-1}$, and by the involution
\begin{equation}\label{aux2}
pa = p^{-1} (may)^* a = p^{-1}(ppay)^*a = p^{-1}(pay)^*pa
= (payp^{-1})^*pa.
\end{equation}
Thus, $payp^{-1} =  (payp^{-1})^*payp^{-1}$. In particular, $payp^{-1}$ is Hermitian.
Since $may = p(payp^{-1})p$, $may$ is Hermitian. In addition, since $payp^{-1}$ is Hermitian and $p$ is invertible, according to (\ref{aux2}),  it is possible to conclude that $aya=a$.

It remains to prove that $nya$ is Hermitian. To this end, consider now
the first equality of (\ref{aux1}), which is equivalent to 
$qyan^{-1}a^*=q^{-1}a^*$. Hence
$$
aq^{-1} = a(yan^{-1})^*q = a(yaq^{-1}q^{-1})^*q = aq^{-1}(yaq^{-1})^*q
= aq^{-1}(qyaq^{-1})^*.
$$
Thus, $qyaq^{-1}=qyaq^{-1}(qyaq^{-1})^*$. As a result, $qyaq^{-1}$ is Hermitian.
Since $nya = q(qyaq^{-1})q$, $nya$ is Hermitian.
\end{proof}

The second statement of Theorem \ref{thm10000} leads to a characterization
of the weighted Moore-Penrose inverse by means of invertible elements.
Note first that, if $\R$ is a unitary ring with an involution and $a\in\R$, then $a$ is regular
if and only if $a^*$ is regular.

\begin{thm}Let $\R$ be a unitary ring with an involution and consider  $a \in \widehat{\R}$
and two invertible and positive element $m,n \in \R$. If $z$ is any inner inverse of $a^*$, then the following 
statements are equivalent.\par
\begin{itemize}
\item[{\rm (i)}] $a$ is weighted Moore-Penrose invertible relative to $m$ and $n$.
\item[{\rm (ii)}] $u=a^*man^{-1} +1 -a^*z$ is invertible.
\item[{\rm (iii)}] $v=man^{-1}a^* + 1-za^*$ is invertible.
\end{itemize}
\noindent In this case, 
$$
a^\dag _{m,n}=a^{\parallel n^{-1}a^*m}=n^{-1}u^{-1}a^*m=n^{-1}a^*v^{-1}m.
$$
\end{thm}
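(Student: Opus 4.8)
The plan is to bring the statement back to Theorem \ref{thm10000} together with the standard description of the inverse along an element by means of a single unit. Put $d:=n^{-1}a^*m$. By Theorem \ref{thm10000}, statement (i) is equivalent to ``$a$ is invertible along $d$'', and then $a^\dag _{m,n}=a^{\parallel d}$. Since $a^*za^*=a^*$ and $m,n$ are units, $d^-:=m^{-1}zn$ is an inner inverse of $d$, because $dd^-d=n^{-1}(a^*za^*)m=d$. A direct conjugation by the units $n$ and $m$ gives
\[
n^{-1}un=da+1-dd^-,\qquad m^{-1}vm=ad+1-d^-d,
\]
and, once these elements are units, $n^{-1}u^{-1}a^*m=(da+1-dd^-)^{-1}d$ and $n^{-1}a^*v^{-1}m=d(ad+1-d^-d)^{-1}$. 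Hence (ii) is equivalent to ``$da+1-dd^-$ is a unit'', (iii) to ``$ad+1-d^-d$ is a unit'', and the displayed formula for $a^\dag _{m,n}$ will follow from the corresponding one for $a^{\parallel d}$.

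So the theorem reduces to the following ring-theoretic fact, which I would isolate and prove as a separate lemma (it is essentially known): for $c\in\R$ and a regular $d$ with a fixed inner inverse $d^-$, $c$ is invertible along $d$ if and only if $dc+1-dd^-$ is a unit, if and only if $cd+1-d^-d$ is a unit, in which case $c^{\parallel d}=(dc+1-dd^-)^{-1}d=d(cd+1-d^-d)^{-1}$. For the direct implication write $b=c^{\parallel d}$: from $b\R=d\R=dd^-\R$ and $\R b=\R d=\R d^-d$ one gets $dd^-b=b=bd^-d$, while (\ref{bad}) gives $dcb=d=bcd$ and $bcb=b$; then $(dc+1-dd^-)b=dcb+b-dd^-b=d$, and in the corner ring $dd^-\R dd^-$ (with unity $dd^-$) the element $dcdd^-$ is invertible with inverse $bd^-dd^-$, so the block-triangular element $dc+1-dd^-$ is a unit of $\R$ and $b=(dc+1-dd^-)^{-1}d$; the case of $cd+1-d^-d$ is the left--right mirror, with $d^-b$ the inverse of $d^-dcd$ in $d^-d\R d^-d$. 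Conversely, if $v:=dc+1-dd^-$ is a unit, set $b:=v^{-1}d$: then $\R b=\R d$ at once (right multiplication by $v^{-1}$ permutes $\R$); since $vd=dcd$, invertibility of $v$ forces $d\in dcd\R$, hence $dcd\R=d\R$ and $b\R=v^{-1}(d\R)=d\R$; and since $vdd^-=dcdd^-$, invertibility of $v$ forces $d\in dcdd^-\R$, hence $dd^-b=b$, whence $bcb=b$ after replacing $dc=v-1+dd^-$. Thus $b=c^{\parallel d}$, and the case of $cd+1-d^-d$ is again symmetric; the two unit conditions are then equivalent to each other, each being equivalent to invertibility along $d$.

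Finally I would apply the lemma with $c=a$ and carry the equivalences and the formula through the conjugations above: (i) $\Leftrightarrow$ $a$ invertible along $d$ $\Leftrightarrow$ $da+1-dd^-$ a unit $\Leftrightarrow$ $u$ a unit, and likewise with $ad+1-d^-d$ and $v$; and $a^\dag _{m,n}=a^{\parallel d}=(da+1-dd^-)^{-1}d=n^{-1}u^{-1}a^*m$, respectively $=d(ad+1-d^-d)^{-1}=n^{-1}a^*v^{-1}m$. The delicate point is the reverse implication of the lemma, namely extracting from the invertibility of the single element $dc+1-dd^-$ the two one-sided relations $b\R=d\R$ and $\R b=\R d$; the equality $b\R=d\R$ is the real obstacle and rests on deducing $d\in dcd\R$ from the invertibility of $dcdd^-$ in the corner ring. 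The remaining ingredients — the conjugation identities, the invertibility criterion for block-triangular elements, and the final substitutions — are routine.
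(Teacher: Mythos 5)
Your reduction is sound and in fact parallels the paper's own (one-line) proof: the paper obtains the theorem by combining Theorem \ref{thm10000} with the cited characterization of invertibility along a regular element (\cite[Theorem 2.3]{ZPC}, or \cite[Corollary 3.8]{ZCP}), which is exactly the lemma you isolate; you simply prove that ingredient yourself. Your conjugation identities are correct: with $d=n^{-1}a^*m$ and $d^-=m^{-1}zn$ one indeed has $u=n(da+1-dd^-)n^{-1}$ and $v=m(ad+1-d^-d)m^{-1}$, so everything hinges on the lemma. The forward direction of your lemma is correct: using $dcb=d=bcd$ and $dd^-b=b=bd^-d$ one checks that $bd^-dd^-$ is the inverse of $dcdd^-$ in the corner ring $dd^-\R dd^-$, and the triangular form then gives invertibility of $dc+1-dd^-$.

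The converse, however, has a genuine gap, precisely at the point you flag. From $vd=dcd$ and invertibility of $v$ you can only conclude $d=v^{-1}dcd\in\R\,dcd$, a \emph{left}-ideal membership; it does not force $d\in dcd\R$. Likewise $vdd^-=dcdd^-$ gives $dd^-=v^{-1}dcdd^-$, which does not yield $dd^-b=b$: in both steps you right-multiply $v$, whereas the claimed conclusions require control of left multiplication by $v$. The repair is short but uses different identities: since $dd^-dc=dc$, one has $dd^-v=dc$ and $(1-dd^-)v=1-dd^-$; hence $(1-dd^-)v^{-1}=1-dd^-$, so $(1-dd^-)b=(1-dd^-)d=0$, i.e.\ $dd^-b=b$ and $b\in d\R$, while $dd^-=dcv^{-1}$ gives $d=dcb=dcd(d^-b)\in dcd\R$; combined with $d=v^{-1}dcd=bcd\in b\R$ this yields $b\R=d\R$, and then your substitution $dc=v-1+dd^-$ does give $bcb=b$. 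Equivalently, note that with respect to $p=dd^-$ the element $v$ is upper triangular with $(2,2)$-block equal to the corner unity $1-p$, so invertibility of $v$ in $\R$ does pass to invertibility of $dcp$ in $p\R p$, and one may then quote \cite[Theorem 3.1]{bb}; this corner invertibility is what your closing paragraph presupposes but never derives from the invertibility of $v$. With one of these repairs the lemma, and hence your whole argument, is complete.
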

\begin{proof}Apply Theorem \ref{thm10000} and \cite[Theorem 2.3]{ZPC}
(or \cite[Corollary 3.8]{ZCP}).
\end{proof}

Let $\R$ be a ring, $a \in \R$ and $s \in \R^{-1}$. It is very simple to prove that 
$a \in \R^\#$ if and only if $s^{-1}as \in \R^\#$ and in this case $(s^{-1}as)^\# = s^{-1}a^\#s$.
This expression is specially useful in matrix theory to investigate
the group inverse of particular matrices since frecuently a matrix $B$ can be decomposed
as $S^{-1}AS$, where $S$ is non-singular and $A$ is simpler than $B$ (e.g. when 
$B$ is diagonalisable).
However, if the ring has an involution and $a \in \R^\dag$, 
it is not in general true that $(s^{-1}as)^\dag = s^{-1}a^\dag s$ (which shows 
that the spectral decomposition of a matrix, in general, cannot be used to
investigate the Moore-Penrose inverse of a matrix).

A related discussion is the following. Let $\R$ be a ring with an involution, $a \in \R$
and $u,v \in \R$ unitary (i.e., $u^{-1}=u^*$ and $v^{-1}=v^*$).
Then, necessary and sufficient for $a \in \R^\dag$ is that  $uav^* \in \R^\dag$; moreover, in this case,
$(u^*av)^\dag = v^*a^\dag u$. 
Again, this expression is used in matrix theory, since
the singular value decomposition of a square complex matrix $B$ allows 
to write $B=U^* \Sigma V$, where $\Sigma$ is diagonal (and real), and $U,V$ are 
unitary. 
However, in general, the expression $(u^*av)^\# = v^*a^\#u$ does not hold; which
shows that the singular value decomposition is not useful to find the group
inverse of a matrix.

Next  two results generalizing and completing the discussion of the 
two previous paragraphs will be presented.

\begin{thm} \label{th28j}
Let $\R$ be a unitary ring and consider $a,d\in \R$ such that $a$ is invertible along $d$.
If $s,r \in \R^{-1}$, then $sar^{-1}$ is invertible along  $rds^{-1}$ and
$(sar^{-1})^{\parallel rds^{-1}} = r a^{\parallel d} s^{-1}$.
\end{thm}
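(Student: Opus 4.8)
The plan is to set $b=a^{\parallel d}$ and to verify directly that the element $c:=rbs^{-1}$ fulfills the three defining requirements of Definition \ref{inversa_de_mary} for $sar^{-1}$ to be invertible along $rds^{-1}$, namely $c(sar^{-1})c=c$, $c\R=(rds^{-1})\R$ and $\R c=\R(rds^{-1})$. Since, by \cite[Theorem 6]{mary}, the element witnessing invertibility along a given element is unique whenever it exists, establishing these three identities will simultaneously prove that $sar^{-1}$ is invertible along $rds^{-1}$ and that its inverse along $rds^{-1}$ equals $c=ra^{\parallel d}s^{-1}$.

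First I would check the outer-inverse identity: using $s^{-1}s=1$, $r^{-1}r=1$ and the relation $bab=b$ coming from $b=a^{\parallel d}$, one gets $c(sar^{-1})c=rbs^{-1}(sar^{-1})rbs^{-1}=r(bab)s^{-1}=rbs^{-1}=c$. Next I would handle the image ideal. Since $s^{-1}\in\R^{-1}$, we have $s^{-1}\R=\R$, hence $c\R=rbs^{-1}\R=rb\R$; left-multiplying the hypothesis $b\R=d\R$ by $r$ gives $rb\R=rd\R$, and again using $s^{-1}\R=\R$ we obtain $rd\R=rds^{-1}\R=(rds^{-1})\R$, so $c\R=(rds^{-1})\R$. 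The left ideal is treated symmetrically: $\R r=\R$ because $r\in\R^{-1}$, so $\R c=\R rbs^{-1}=\R bs^{-1}$; right-multiplying $\R b=\R d$ by $s^{-1}$ yields $\R bs^{-1}=\R ds^{-1}$, and $\R r=\R$ gives $\R ds^{-1}=\R rds^{-1}=\R(rds^{-1})$, whence $\R c=\R(rds^{-1})$.

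Combining the three verified identities with the uniqueness statement of \cite[Theorem 6]{mary} finishes the argument. I do not anticipate a genuine obstacle here: the whole proof is a bookkeeping exercise in cancelling the units $s,r$ inside ideals and inside the product $c(sar^{-1})c$. The only point that warrants a word of care is the systematic use of the elementary facts $s^{-1}\R=\R=\R r$ for units $s,r$, which is what lets the conjugating factors pass harmlessly through the ideal equalities $b\R=d\R$ and $\R b=\R d$.
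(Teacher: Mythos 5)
Your proposal is correct and follows essentially the same route as the paper: the paper's proof likewise reduces the three defining conditions for $ra^{\parallel d}s^{-1}$ to be the inverse of $sar^{-1}$ along $rds^{-1}$ to the corresponding three properties of $a^{\parallel d}$, merely stating that they "can be deduced" while you carry out the routine unit-cancellation details explicitly.
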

\begin{proof} Three facts must be proved: $\R ra^{\parallel d}s^{-1}=\R rds^{-1}$,
$ ra^{\parallel d}s^{-1} \R= rds^{-1}\R$, and $ra^{\parallel d}s^{-1}$ is 
an outer inverse of $sar^{-1}$. These facts can be deduced  from $\R a^{\parallel d}=\R d$,
$a^{\parallel d} \R= d\R$, and $a^{\parallel d}$ is an outer inverse of $a$, 
respectively. 
\end{proof}

\begin{rema}\rm Note that according to Theorem \ref{th28j}, if
$\R$ is a unitary ring with an involution and $u,v$ are unitary, 
then $uav^*$ is invertible along $vdu^*$ and
$(uav^*)^{\parallel vdu^*} = v a^{\parallel d} u^*$, where  $a$ and $d$ are as in Theorem \ref{th28j}.\end{rema}

Let $\R$ be a ring
with an involution and consider $a\in \R^\dag$ and $s,r \in \R^{-1}$.
According to Theorem \ref{th28j} and \cite[Theorem 11]{mary}, $sar^{-1}$ is invertible along $ra^*s^{-1}$ and 
$(sar^{-1})^{\parallel ra^*s^{-1}} = ra^\dag s^{-1}$.
If  $c=sar^{-1}$, then $c$ is invertible along
$r(s^{-1}cr)^*s^{-1}$ and $c^{\parallel r(s^{-1}cr)^*s^{-1}} = r(s^{-1}cr)^\dag s^{-1}$.
Observe that $r(s^{-1}cr)^*s^{-1} = rr^*c^*(ss^*)^{-1}$.
Thus,  the following theorem has been partially proved.

\begin{thm}
Let $\R$ be a unitary ring with an involution and consider $a\in \R$ and $s, r  \in \R^{-1}$.
Suppose that $s^{-1}ar \in \R^\dag$.
\begin{itemize}
\item[{\rm (i)}]  The element $a$ is invertible along $rr^*a^*(ss^*)^{-1}$ and
$(s^{-1}ar)^\dag = r^{-1}a^{\parallel rr^*a^*(ss^*)^{-1}} s$.
\item[{\rm (ii)}] If $a \in \R^\dag$, then $(s^{-1}ar)^\dag = r^{-1}a^\dag s$
if and only if $ss^*a\R = arr^*\R$ and $\R ss^*a = \R a rr^*$.
\end{itemize}
\end{thm}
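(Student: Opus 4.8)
The plan is to reduce statement (ii) to a comparison of two inverses along an element, since statement (i) is already available from the discussion immediately preceding the theorem (there the Moore--Penrose invertible element is $s^{-1}ar$, which plays the role of the element called $a$ in that paragraph). Writing $d=rr^*a^*(ss^*)^{-1}$, statement (i) gives $(s^{-1}ar)^\dag=r^{-1}a^{\parallel d}s$, so the claimed equality $(s^{-1}ar)^\dag=r^{-1}a^\dag s$ holds if and only if $a^{\parallel d}=a^\dag$. Since $a\in\R^\dag$, \cite[Theorem 11]{mary} gives $a^\dag=a^{\parallel a^*}$, and the task becomes to decide when $a^{\parallel d}=a^{\parallel a^*}$.

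The crux is the elementary remark that, once two inverses along elements are both known to exist, they are equal exactly when the associated one-sided image ideals are equal. Indeed, put $b=a^{\parallel a^*}$; then $bab=b$, $b\R=a^*\R$ and $\R b=\R a^*$, so if $d\R=a^*\R$ and $\R d=\R a^*$ the element $b$ meets all the requirements of Definition \ref{inversa_de_mary} for the inverse of $a$ along $d$, whence $b=a^{\parallel d}$ by the uniqueness in \cite[Theorem 6]{mary}; the converse is immediate from Definition \ref{inversa_de_mary}. Here both inverses exist ($a^{\parallel d}$ by statement (i), $a^{\parallel a^*}=a^\dag$ by hypothesis), so $a^{\parallel d}=a^\dag$ if and only if $d\R=a^*\R$ and $\R d=\R a^*$.

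It then only remains to bring these two ideal identities into the stated form. Since $rr^*$ and $ss^*$ are invertible, $d\R=rr^*a^*\R$ and $\R d=\R a^*(ss^*)^{-1}$, so the conditions read $rr^*a^*\R=a^*\R$ and $\R a^*(ss^*)^{-1}=\R a^*$. Applying the involution $*$, which reverses products and fixes the Hermitian elements $rr^*$ and $(ss^*)^{-1}$, turns the first into $\R arr^*=\R a$ and the second into $(ss^*)^{-1}a\R=a\R$, i.e. $ss^*a\R=a\R$. Finally, invertibility of $ss^*$ gives $\R a=\R ss^*a$ and invertibility of $rr^*$ gives $a\R=arr^*\R$, so the pair $\{\R arr^*=\R a,\ ss^*a\R=a\R\}$ coincides with $\{\R ss^*a=\R arr^*,\ ss^*a\R=arr^*\R\}$, which is the asserted criterion. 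I do not foresee a genuine obstruction: the only point demanding care is this last chain of ideal manipulations — keeping track of which side each invertible factor sits on and remembering that $*$ interchanges left and right ideals — the rest being direct appeals to statement (i), to \cite[Theorem 11]{mary}, and to the uniqueness of the inverse along an element. (Alternatively one could verify the four Moore--Penrose identities for $r^{-1}a^\dag s$ relative to $s^{-1}ar$ by hand; this produces the equivalent requirements that $ss^*$ commute with $aa^\dag$ and $rr^*$ with $a^\dag a$, but the route above is shorter and stays within the framework of the paper.)
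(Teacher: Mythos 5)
Your proof is correct and follows essentially the same route as the paper: statement (i) is taken from the preceding discussion, and statement (ii) is reduced to the question of when $a^{\parallel rr^*a^*(ss^*)^{-1}}=a^\dag$, which is then characterized via the defining ideal equalities of the inverse along an element (the paper phrases this through $\R a^\dag=\R a^*$ and $a^\dag\R=a^*\R$, you through $a^\dag=a^{\parallel a^*}$ and uniqueness, which is the same thing) and translated into the stated conditions using the involution and the invertibility of $rr^*$ and $ss^*$.
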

\begin{proof}Statement (i) was proved in the paragraph preceding this Theorem.\par

To prove statement (ii), define $f=rr^* a^* (ss^*)^{-1}$. To deduce statement (ii)
from statement (i), it is necessary to prove that necessary and sufficient for $a^\dag = a^{\parallel f}$
is that $ss^*a\R = arr^*\R$ and $\R ss^*a = \R a rr^*$. According to the definition of the inverse along
an element, $a^\dag = a^{\parallel f}$ if and only if 
$\R a^\dag = \R f$ and  $a^\dag \R = f\R$.
Recall that $\R a^\dag = \R a^*$, hence $\R a^\dag = \R f$ is equivalent to 
$a\R = f^* \R$. Since $f^* = (ss^*)^{-1} a rr^*$, 
$a\R = f^* \R$ if and only if $ss^* a \R = a rr^* \R$. The remaining identity can be prove in a similar way.
\end{proof}

The following  result deals with an expression of $(s^{-1} a r)^\#$, where $r$ and $s$ are 
invertible elements of a unitary ring $\R$.

\begin{thm} \label{thm28k}Let $\R$ be a unitary ring and consider 
$a, u, v \in \R$ such that $r$ and $s$ are invertible.
If $s^{-1}ar \in \R^\#$, then
\begin{itemize}
\item[{\rm (i)}] $a$ is invertible along $rs^{-1}ars^{-1}$ and $(s^{-1}ar)^\# = 
r^{-1} a^{\parallel rs^{-1} a rs^{-1}} s$.
\item[{\rm (ii)}] If $a \in \R^\#$, then $(s^{-1}ar)^\# = r^{-1}a^\# s$ if and only if 
$\R asr^{-1} = \R rs^{-1}a$ and $sr^{-1}a \R = ars^{-1} \R$.
\end{itemize} 
\end{thm}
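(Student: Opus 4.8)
The plan is to imitate the strategy already used for the two preceding theorems, namely to combine \cite[Theorem 11]{mary} with Theorem \ref{th28j}. For statement (i), I would begin from the hypothesis $s^{-1}ar\in\R^\#$, which by \cite[Theorem 11]{mary} says that $c:=s^{-1}ar$ is invertible along itself and $c^{\parallel c}=c^\#$. Then I would apply Theorem \ref{th28j} with $c$ playing the role of \emph{both} $a$ and $d$, and with the given $s,r\in\R^{-1}$: this yields that $scr^{-1}$ is invertible along $rcs^{-1}$ and $(scr^{-1})^{\parallel rcs^{-1}}=rc^{\parallel c}s^{-1}$. Since $scr^{-1}=a$, $rcs^{-1}=rs^{-1}ars^{-1}$ and $c^{\parallel c}=(s^{-1}ar)^\#$, this is precisely the assertion that $a$ is invertible along $rs^{-1}ars^{-1}$ with $a^{\parallel rs^{-1}ars^{-1}}=r(s^{-1}ar)^\#s^{-1}$; multiplying by $r^{-1}$ on the left and $s$ on the right gives (i).

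For statement (ii), set $f:=rs^{-1}ars^{-1}$. By (i) we have $(s^{-1}ar)^\#=r^{-1}a^{\parallel f}s$, so $(s^{-1}ar)^\#=r^{-1}a^\#s$ if and only if $a^{\parallel f}=a^\#$. Now $a\in\R^\#$ gives, again by \cite[Theorem 11]{mary}, that $a$ is invertible along $a$ with $a^{\parallel a}=a^\#$, while by part (i) $a$ is also invertible along $f$. So the problem reduces to deciding when $a^{\parallel f}=a^{\parallel a}$. Using the definition of the inverse along an element together with its uniqueness (\cite[Theorem 6]{mary}), $a^{\parallel f}=a^{\parallel a}$ holds if and only if $f\R=a\R$ and $\R f=\R a$: if these ideal equalities hold then $b:=a^{\parallel f}$ satisfies $bab=b$, $b\R=f\R=a\R$ and $\R b=\R f=\R a$, so $b=a^{\parallel a}$ by uniqueness, and the converse is immediate from $a^{\parallel f}\R=f\R$ and $\R a^{\parallel f}=\R f$.

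It then remains to rewrite $f\R=a\R$ and $\R f=\R a$ in the form stated in (ii). Since $r$ and $s$ are invertible, cancelling the appropriate invertible factors inside the image ideals gives $f\R=rs^{-1}a\R$ and $\R f=\R ars^{-1}$, as well as $a\R=ars^{-1}\R$ and $\R a=\R rs^{-1}a$. Multiplying the equality $f\R=a\R$ on the left by the invertible element $sr^{-1}$ turns it into $sr^{-1}a\R=ars^{-1}\R$, and multiplying $\R f=\R a$ on the right by $sr^{-1}$ turns it into $\R asr^{-1}=\R rs^{-1}a$, which are exactly the two conditions in (ii). I do not anticipate a genuine obstacle; the only point that needs care is bookkeeping, namely keeping straight which invertible factor may be cancelled on which side of a one-sided ideal, since left and right ideals behave differently under one-sided multiplication.
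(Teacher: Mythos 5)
Your proposal is correct and follows essentially the same route as the paper: part (i) combines \cite[Theorem 11]{mary} with Theorem \ref{th28j} applied to $c=s^{-1}ar$ taken along itself, and part (ii) reduces the identity $(s^{-1}ar)^\#=r^{-1}a^\#s$ to $a^{\parallel f}=a^\#$ and then to the ideal equalities $f\R=a\R$, $\R f=\R a$, which are rewritten by cancelling invertible factors. The only cosmetic difference is that the paper phrases the middle step through $\R a^\#=\R a$ and $a^\#\R=a\R$ rather than through $a^{\parallel a}=a^\#$, which is the same uniqueness argument.
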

\begin{proof}(i). 
Since $s^{-1}ar \in \R^\#$, $s^{-1}ar$ is invertible along $s^{-1}ar$ 
(\cite[Theorem 11]{mary}). According to
Theorem \ref{th28j}, $a=s(s^{-1}ar)r^{-1}$ is invertible along $rs^{-1}ars^{-1}$ and
$a^{\parallel rs^{-1}ars^{-1}} = r(s^{-1}ar)^\#s^{-1}$.

\noindent (ii). Let $g=rs^{-1}ars^{-1}$. 
According to statement (i), $(s^{-1}ar)^\# = r^{-1}a^\# s$ if and only if
$a^{\parallel g} = a^\#$. Recall that  $a^\#$ is an outer inverse of $a$, $\R a^\# = \R a$, and $a^\# \R = a\R$.
As a result,
$$
a^{\parallel g} = a^\# \iff 
\begin{cases} \R a^\# = \R g \\  a^\# \R = g \R \end{cases} \iff 
\begin{cases} \R a = \R rs^{-1}ars^{-1} \\  a \R = rs^{-1}ars^{-1} \R \end{cases} \iff
\begin{cases} \R asr^{-1} = \R rs^{-1}a \\  sr^{-1}a \R = ars^{-1} \R. \end{cases} 
$$
\end{proof}

\indent In the particular case of rings with an involutiom, the following result can be deduced.

\begin{cor}Let $\R$ be a unitary ring with an involution and consider $a, u, v \in \R$ such that $u$ and $v$ are unitary.
If $u^*av \in \R^\#$, then
\begin{itemize}
\item[{\rm (i)}] $a$ is invertible along $vu^*avu^*$ and $(u^*av)^\# = 
v^* a^{\parallel vu^* a vu^*} u$.
\item[{\rm (ii)}] If $a \in \R^\#$, then $(u^*av)^\# = v^* a^\# u$ if and only if 
$\R auv^* = \R vu^*a$ and $uv^*a \R = avu^* \R$.
\end{itemize} 
\end{cor}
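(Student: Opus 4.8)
The plan is to obtain this Corollary as an immediate specialization of Theorem~\ref{thm28k}. Recall that a unitary element $u$ satisfies $u^{-1}=u^*$, so if $u,v\in\R$ are unitary then in particular $u,v\in\R^{-1}$, and hence the hypotheses of Theorem~\ref{thm28k} are met with the choice $s=u$, $r=v$. First I would record the translations of the relevant expressions under this substitution: $s^{-1}ar = u^{-1}av = u^*av$, so the assumption $s^{-1}ar\in\R^\#$ becomes $u^*av\in\R^\#$; moreover $r^{-1}=v^{-1}=v^*$, $s=u$, and $rs^{-1}ars^{-1} = vu^{-1}avu^{-1} = vu^*avu^*$.

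With these identifications in hand, statement (i) of Theorem~\ref{thm28k} asserts precisely that $a$ is invertible along $vu^*avu^*$ and that $(u^*av)^\# = r^{-1}a^{\parallel vu^*avu^*}s = v^*a^{\parallel vu^*avu^*}u$, which is statement (i) of the Corollary. Likewise, statement (ii) of Theorem~\ref{thm28k}, after the same substitution, reads: $(u^*av)^\# = v^*a^\# u$ if and only if $\R asr^{-1} = \R rs^{-1}a$ and $sr^{-1}a\,\R = ars^{-1}\,\R$; rewriting $asr^{-1} = auv^*$, $rs^{-1}a = vu^*a$, $sr^{-1}a = uv^*a$ and $ars^{-1} = avu^*$ turns these into $\R auv^* = \R vu^*a$ and $uv^*a\,\R = avu^*\,\R$, which is statement (ii) of the Corollary.

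Since the whole argument reduces to substituting unitary elements into an already established theorem, there is no genuine obstacle here; the only care required is the routine but mildly error-prone bookkeeping of the replacement $s=u$, $r=v$ together with the repeated use of $u^{-1}=u^*$ and $v^{-1}=v^*$ to cast the resulting image- and kernel-ideal conditions into the symmetric form displayed in the statement. One could optionally note that the involution intervenes only through the identity $u^{-1}=u^*$, so that the Corollary is nothing more than the unitary case of Theorem~\ref{thm28k}.
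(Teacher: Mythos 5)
Your proposal is correct and is exactly the paper's argument: the paper also proves the corollary by applying Theorem \ref{thm28k} with $s=u$, $r=v$ and $u^{-1}=u^*$, $v^{-1}=v^*$, and your bookkeeping of the resulting expressions is accurate.
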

\begin{proof}Apply Theorem \ref{thm28k}.
\end{proof}

There is a matrix representation for elements in unitary rings which has been useful
to prove many results in the previous literature. Next follows this representation.
Let $p \in \R$ be an idempotent. Any element $x$ in a unitary ring $\R$ can be represented as
follows:
\begin{equation}\label{matriz}
x = \mmm{pxp}{px(1-p)}{(1-p)xp}{(1-p)x(1-p)}.
\end{equation}
Observe that 
$$
x = pxp + px(1-p) + (1-p)xp + (1-p)x(1-p).
$$
Recall that since $p$ is an idempotent, $p \R p$ and $(1-p) \R (1-p)$ 
are subrings with units $p$ and $1-p$, respectively.
If in addition, $\R$ has an involution and the idempotent $p$ is Hermitian, then
the above matrix representation preserves the involution, i.e., 
$$
\mmm{x_1}{x_2}{x_3}{x_4}^* = \mmm{x_1^*}{x_3^*}{x_2^*}{x_4^*}.
$$

If $a$ and $d$ are elements in a unitary ring with an involution
and $a$ is invertible along $d$, then
$a^{\parallel d}$ is an outer inverse of $a$. Hence 
$aa^{\parallel d}$ and $a^{\parallel d}$ are idempotents. It will be characterized when these idempotents are Hermitian.
Before doing this, an useful representation will be given. To this end,
however, first recall that according to \cite[Theorem 3.1]{bb},
$a$ is invertible along $d$ if and only if $dap$ is invertible in
$p\R p$, where $p=dd^-$ and $d^-$ is such that $d=dd^-d$. Furthermore, in this case, $a^{\parallel d}=wd$, where 
$w = (dap)_{p \R p}^{-1}$.

\begin{lem}\label{lemarep}
Let $\R$ be a unitary ring and consider $a\in\R$ and $d \in \widehat{\R}$. If $d^-$ is an inner inverse of $d$ and 
if the representation of $a$ respect the idempotent $p=dd^-$ is
$$
a = \mmm{x}{y}{z}{t},
$$
then
$$
d = \mmm{dp}{d(1-p)}{0}{0}, \qquad
da = \mmm{dap}{da(1-p)}{0}{0}, \qquad ap=x+z.
$$
Furthermore, if $a$ is invertible along $d$, then

\begin{equation}\label{matrixaII}
a^{\parallel d} =  (dap)_{p \R p}^{-1}d=\mmm{(dap)_{p \R p}^{-1}dp}{(dap)_{p \R p}^{-1}d(1-p)}{0}{0}.
\end{equation}
\end{lem}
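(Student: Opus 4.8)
The plan is to push everything through the single identity $pd=d$, which holds because $p=dd^-$ and $d=dd^-d$. First I would read off the matrix of $d$ with respect to $p$ from the general representation (\ref{matriz}): its four corners are $pdp$, $pd(1-p)$, $(1-p)dp$, $(1-p)d(1-p)$. Since $pd=dd^-d=d$, we get $(1-p)d=d-pd=0$, so the whole bottom row vanishes, while the top row reduces to $dp$ and $d(1-p)$. For $da$ the argument is identical with $da$ in place of $d$: because $p(da)=(pd)a=da$ we have $(1-p)(da)=0$, which kills the bottom row, and the top row is $dap$ and $da(1-p)$. The relation $ap=x+z$ is then immediate, since $x+z=pap+(1-p)ap=ap$.

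For the last assertion I would invoke the description recalled in the paragraph preceding the lemma (\cite[Theorem 3.1]{bb}): when $a$ is invertible along $d$, the element $dap$ is invertible in $p\R p$ and $a^{\parallel d}=wd$ with $w=(dap)_{p\R p}^{-1}$, which is exactly the first equality in (\ref{matrixaII}). It is worth noting first that $dap$ really does lie in $p\R p$, i.e. $dap=p(dap)p$: indeed $p(dap)=(pd)ap=dap$ and $(dap)p=dap$. Now $w\in p\R p$, so $pw=w=wp$; hence $p(wd)=wd$, which forces the bottom row of the matrix of $a^{\parallel d}=wd$ to be zero, and the top corners are $p(wd)p=wdp=(dap)_{p\R p}^{-1}dp$ and $p(wd)(1-p)=wd(1-p)=(dap)_{p\R p}^{-1}d(1-p)$, as claimed in (\ref{matrixaII}).

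None of this is a genuine obstacle; the computations are routine matrix bookkeeping. The only points requiring a little care are keeping straight which copies of $p$ (or $1-p$) act on the left and which on the right when expanding the corners, and making sure the subring inverse $(dap)_{p\R p}^{-1}$ is only invoked after it has been observed both that $dap\in p\R p$ and that $a$ is invertible along $d$, so that this inverse actually exists.
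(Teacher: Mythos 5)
Your proof is correct and follows essentially the same route as the paper: both rest on the identity $pd=d$ to kill the bottom rows of the representations of $d$ and $da$, read off $ap=x+z$ from the decomposition, and invoke \cite[Theorem 3.1]{bb} for $a^{\parallel d}=(dap)_{p\R p}^{-1}d$, after which the matrix form of $a^{\parallel d}$ is immediate. Your extra checks (that $dap\in p\R p$ and that $w\in p\R p$ forces the bottom row of $wd$ to vanish) are just a more explicit writing-out of what the paper calls ``evident.''
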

\begin{proof} The representation of $d$ can be deduced from the fact that
$pd=d$. The representation of $da$ is evident.
Since 
$$
ap=\mmm{x}{0}{z}{0},
$$
\noindent  $ap=x+z$. Recall that according to \cite[Theorem 3.1]{bb},
$a^{\parallel d}=(dap)_{p \R p}^{-1}d$. In particular,
the representation of  $a^{\parallel d}$ is evident.
\end{proof}

\begin{thm}\label{t29}Let $\R$ be a unitary ring with an involution. Let $a\in \R$ and $d \in \widehat{\R}$ such that
$a$ is invertible along $d$. 
\begin{itemize}
\item[{\rm (i)}] If there exists $d^-$, an inner inverse of $d$, such that
$dd^-$ Hermitian, then $a^{\parallel d} a$ is Hermitian if and only if 
$(da)^* \in d\R$.
\item[{\rm (ii)}] If there exists $d^-$, an inner inverse of $d$, such that
$d^- d$ Hermitian, then $a a^{\parallel d}$ is Hermitian if and only if 
$ad \in d^* \R$.
\end{itemize}
\end{thm}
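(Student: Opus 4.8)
The plan for part (i) is to exploit the matrix representation of Lemma~\ref{lemarep}. Fix an inner inverse $d^-$ of $d$ with $p:=dd^-$ Hermitian, so that $p$ is a Hermitian idempotent and the representation of every element with respect to $p$ preserves the involution. Put $b:=a^{\parallel d}$. By Lemma~\ref{lemarep}, $b=wd$ with $w:=(dap)^{-1}_{p\R p}\in p\R p$, and since $pd=dd^-d=d$ we have $pda=da$, hence $ba=w\,da=w(dap)+w\,da(1-p)=p+w\,da(1-p)$, i.e.
$$
ba=\mmm{p}{w\,da(1-p)}{0}{0}.
$$
Taking adjoints and using $p^*=p$, the element $ba$ is Hermitian if and only if $w\,da(1-p)=0$; multiplying this identity on the left by $dap$ and using $(dap)w=p$ and $p\,da=da$ gives $w\,da(1-p)=0\iff da(1-p)=0\iff da=dap$. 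It then remains to check $da=dap\iff(da)^*\in d\R$. If $da=dap$, then $(da)^*=p(da)^*=dd^-(da)^*=d\bigl(d^-(da)^*\bigr)\in d\R$; conversely, if $(da)^*=dc$, then $(1-p)d=0$ yields $(1-p)(da)^*=(1-p)dc=0$, hence $da(1-p)=\bigl((1-p)(da)^*\bigr)^*=0$, so $da=dap$. This proves (i).

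For part (ii) the idea is to reduce to (i) by passing to the involution. By Remark~\ref{remark1}, $a^*$ is invertible along $d^*$ and $(a^*)^{\parallel d^*}=(a^{\parallel d})^*=b^*$. Moreover, if $d^-$ is an inner inverse of $d$ with $d^-d$ Hermitian, then $(d^-)^*$ is an inner inverse of $d^*$ and $d^*(d^-)^*=(d^-d)^*$ is Hermitian, so part (i) applies to the pair $a^*$, $d^*$ with the inner inverse $(d^-)^*$ of $d^*$. It gives: $(a^*)^{\parallel d^*}\,a^*$ is Hermitian if and only if $(d^*a^*)^*\in d^*\R$. Since $(a^*)^{\parallel d^*}\,a^*=b^*a^*=(ab)^*$, which is Hermitian exactly when $ab$ is, and $(d^*a^*)^*=ad$, this is precisely the statement that $ab$ is Hermitian if and only if $ad\in d^*\R$.

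The delicate points are all bookkeeping ones rather than conceptual. The main one is the computation of the matrix form of $ba$ with respect to $p$: one must see that $w\,da(1-p)$ lies purely in the $(1,2)$-corner $p\R(1-p)$ and that the other two corners vanish, and then justify $w\,da(1-p)=0\iff da(1-p)=0$, whose nontrivial direction uses the invertibility of $dap$ inside $p\R p$. In part (ii) the only thing to be careful about is verifying that $(d^-)^*$ is a legitimate inner inverse of $d^*$ with $d^*(d^-)^*$ Hermitian, so that part (i) genuinely applies to $a^*$, $d^*$.
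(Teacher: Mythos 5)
Your proof is correct and follows essentially the same route as the paper: for (i) both use the matrix representation with respect to the Hermitian idempotent $p=dd^-$ from Lemma \ref{lemarep} to write $a^{\parallel d}a=\mmm{p}{wda(1-p)}{0}{0}$, reduce Hermitian-ness to $da=dap=dadd^-$ via the invertibility of $dap$ in $p\R p$, and then translate this into $(da)^*\in d\R$ (your converse via $(1-p)d=0$ is just a cosmetic variant of the paper's direct computation). Part (ii) is likewise the paper's argument, namely applying (i) to $a^*,d^*$ through Remark \ref{remark1}, which you merely spell out in more detail.
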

\begin{proof}(i). Let $p$ be the Hermitian idempotent $p=dd^-$.
The matrix representation given in Lemma \ref{lemarep} will be used.
Denote also $w = (dap)^{-1}_{p \R p}$.
\begin{equation*}
\begin{split}
a^{\parallel d}a & = \mmm{wdp}{wd(1-p)}{0}{0}
\mmm{x}{y}{z}{t} = \mmm{wdx+wdz}{wdy+wdt}{0}{0} \\
& = \mmm{wdap}{wda(1-p)}{0}{0} = \mmm{p}{wda(1-p)}{0}{0}.
\end{split}
\end{equation*}
Since $p$ is Hermitian, the above matrix representation preserves the
involution. Therefore, $a^{\parallel d}a$ is Hermitian if and only if  $wda(1-p)=0$, which 
is equivalent to $wda=wdap$, which in turn is equivalent to $pda=pdap$ (recall that $w$ is the inverse of
$dap$ in $p\R p$). However,  $da=pda=pdap=dadd^-$.\par
Now, if $da=dadd^-$, then $(da)^* = (da(dd^-))^* = dd^- (da)^* \in d\R$.
If $(da)^* \in d\R$, then $(da)^* = du$ for some $u \in \R$, and then
$dadd^- = u^*d^*dd^- = u^*d^*(dd^-)^* = u^*(dd^-d)^* = u^*d^* = da$. 

(ii). Apply statement  (i) and Remark \ref{remark1}.
\end{proof}

\section{A representation of the inverse along an element}

\noindent In this section a representation of the inverse along an element will be presented. First,
however, two facts need to be recalled. Given a group invertible element
$x$ in a unitary ring $\R$, the {\em spectral idempotent} of $x$ is defined
as $x^\pi = 1-xx^\#$. In addition, recall that if $a\in \R$ and $d\in\widehat{\R}$
are such that $a$ is invertible along $d$, then $ad$ and $da$ are group invertible
(\cite[Theorem 7]{mary}).

\begin{lem}\label{lgi} Let $\R$ be a unitary ring and consider $a\in \R$ and $d \in \widehat{\R}$. If $a$ is invertible along $d$,
$d^-$ is an inner inverse of $d$ and $p=dd^-$, then, using the representation 
in Lemma \ref{lemarep}, 
$$
(da)^\# = \mmm{w}{w^2da(1-p)}{0}{0}, \qquad (da)^\pi = \mmm{0}{-wda(1-p)}{0}{1-p},
$$
where $w$ is the inverse of $dap$ in $p \R p$.
\end{lem}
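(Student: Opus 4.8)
The plan is to verify directly that the proposed matrix $M = \mms{w}{w^2da(1-p)}{0}{0}$ (with entries understood in the corner rings determined by $p$) is the group inverse of $da$, and then read off $(da)^\pi = 1 - da\,M$. From Lemma \ref{lemarep} we have $da = \mms{dap}{da(1-p)}{0}{0}$, and writing $c = dap \in p\R p$, $e = da(1-p) \in p\R(1-p)$, we know $w = c^{-1}_{p\R p}$, so $cw = wc = p$. The computation then reduces to three routine matrix products in the $2\times 2$ representation: $da\cdot M$, $M\cdot da$, and checking both equal, and that $da\cdot M\cdot da = da$ and $M\cdot da\cdot M = M$.

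Concretely, I would first compute $M\,da = \mms{w}{w^2e}{0}{0}\mms{c}{e}{0}{0} = \mms{wc}{we}{0}{0} = \mms{p}{we}{0}{0}$, and $da\,M = \mms{c}{e}{0}{0}\mms{w}{w^2e}{0}{0} = \mms{cw}{cw^2e}{0}{0} = \mms{p}{we}{0}{0}$; here I use $cw = p$ and $cw^2 e = (cw)we = p\,we = we$ since $we \in p\R(1-p)$ already has left support $p$. This simultaneously shows $M\,da = da\,M$. Next, $da\,M\,da = \mms{p}{we}{0}{0}\mms{c}{e}{0}{0} = \mms{pc}{pe}{0}{0} = \mms{c}{e}{0}{0} = da$, using $pc = c$ and $pe = e$. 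Finally $M\,da\,M = \mms{p}{we}{0}{0}\mms{w}{w^2e}{0}{0} = \mms{pw}{pw^2e}{0}{0} = \mms{w}{w^2e}{0}{0} = M$, using $pw = w$. Thus $M = (da)^\#$.

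For the spectral idempotent, $(da)^\pi = 1 - da(da)^\# = \mms{p}{0}{0}{1-p} - \mms{p}{we}{0}{0} = \mms{0}{-we}{0}{1-p}$, where I have written the identity in the $p$-representation as $1 = \mms{p}{0}{0}{1-p}$. Substituting back $e = da(1-p)$ and $w = (dap)^{-1}_{p\R p}$ gives exactly the claimed expressions. The only subtlety worth a sentence in the write-up is that all the entrywise products and the appearances of the ``unit'' $p$ in the top-left corner are legitimate because $w, c \in p\R p$ and $e \in p\R(1-p)$, so everything lives in the appropriate corner rings and the matrix arithmetic with respect to $p$ is the honest multiplication in $\R$; there is no real obstacle here, just bookkeeping with the idempotent $p$ and the relations $cw = wc = p$.
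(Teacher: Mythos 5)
Your proof is correct and follows essentially the same route as the paper: the paper asserts via the matrix representation of Lemma \ref{lemarep} that $\mmm{w}{w^2da(1-p)}{0}{0}$ is the group inverse of $da$ (leaving the verification as ``easy to see'') and then computes $(da)^\pi = 1-(da)(da)^\#$ exactly as you do. You simply carry out explicitly the three defining identities of the group inverse, which is a welcome but not different argument.
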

\begin{proof}Using the matrix representation of $da$ given in 
Lemma \ref{lemarep} it easy to see that
$$
u = \mmm{w}{w^2da(1-p)}{0}{0}
$$
is the group inverse of $da$.
In addition, 
\begin{equation*}
\begin{split}
(da)^\pi & = 1-(da)(da)^\# \\ & = 
\mmm{p}{0}{0}{1-p} - \mmm{dap}{da(1-p)}{0}{0} \mmm{w}{w^2da(1-p)}{0}{0}
= \mmm{0}{-wda(1-p)}{0}{1-p}.
\end{split}
\end{equation*}
\end{proof}

\indent In the following theorem, a representation of the inverse along an element will be proved.

\begin{thm} \label{thm1000}
Let $\R$ be a unitary ring. Let $a\in\R$ and $d \in \widehat{\R}$ be such that $a$ is invertible along
$d$ and consider  $d^-$,  an inner inverse of $d$, and $p=dd^-$. For $t\in\R$, necessary and sufficient
for $da+t (da)^\pi$ to be invertible is that
$-(1-p)twda(1-p)+ (1-p)t(1-p)$ is invertible in the subring $(1-p)\R (1-p)$,
where $w= (dap)^{-1}_{p\R p}$. Moreover, under
this situation,
$$
a^{\parallel d} = (da+t (da)^\pi)^{-1}d, 
$$
\end{thm}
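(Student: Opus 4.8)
The plan is to exploit the $2\times 2$ matrix representation with respect to the Hermitian-free idempotent $p = dd^-$ that has already been set up in Lemma \ref{lemarep} and Lemma \ref{lgi}. Everything in sight is block lower-triangular in the $(p, 1-p)$ decomposition: $da$ has the form $\mms{dap}{da(1-p)}{0}{0}$, the spectral idempotent $(da)^\pi$ has the form $\mms{0}{-wda(1-p)}{0}{1-p}$, and a general $t$ contributes $\mms{ptp}{pt(1-p)}{(1-p)tp}{(1-p)t(1-p)}$. So the first step is simply to add these up and write
$$
da + t(da)^\pi = \mmm{dap}{da(1-p) + ptp\cdot 0 + \cdots}{\ast}{\ast},
$$
being careful with the cross terms: $t(da)^\pi$ equals $\mms{ptp}{pt(1-p)}{(1-p)tp}{(1-p)t(1-p)}\mms{0}{-wda(1-p)}{0}{1-p}$, whose $(1,1)$ entry is $0$, whose $(2,1)$ entry is $0$, whose $(1,2)$ entry is $-ptpwda(1-p) + pt(1-p)$, and whose $(2,2)$ entry is $-(1-p)tpwda(1-p) + (1-p)t(1-p)$. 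Hence $da + t(da)^\pi$ is block lower-triangular only in the sense that its $(2,1)$ entry vanishes; its $(1,1)$ entry is $dap$ (which is invertible in $p\R p$ with inverse $w$, by \cite[Theorem 3.1]{bb}) and its $(2,2)$ entry is exactly $-(1-p)twda(1-p) + (1-p)t(1-p)$.

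The second step is the standard fact that an upper-block-triangular element $\mms{\alpha}{\beta}{0}{\delta}$ of the ring $p\R p \oplus (1-p)\R(1-p) \oplus \cdots$ (more precisely, an element $x$ with $(1-p)xp = 0$) is invertible in $\R$ if and only if $pxp$ is invertible in $p\R p$ and $(1-p)x(1-p)$ is invertible in $(1-p)\R(1-p)$. Since $pxp = dap$ is already invertible in $p\R p$, invertibility of $da + t(da)^\pi$ is equivalent to invertibility of $(1-p)x(1-p) = -(1-p)twda(1-p) + (1-p)t(1-p)$ in $(1-p)\R(1-p)$, which is the claimed criterion. I would prove the block-triangular inversion fact in-line by writing the candidate inverse $\mms{\alpha^{-1}}{-\alpha^{-1}\beta\delta^{-1}}{0}{\delta^{-1}}$ and checking it multiplies to $1 = p + (1-p)$; this is routine.

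For the formula $a^{\parallel d} = (da + t(da)^\pi)^{-1}d$, the cleanest route is to compute the inverse in block form and multiply on the right by $d = \mms{dp}{d(1-p)}{0}{0}$. Writing $x = da + t(da)^\pi$, its inverse has $(1,1)$ entry $w$ and $(2,1)$ entry $0$, so $x^{-1}d$ has first column $\mms{w \cdot dp}{0}$ and second column $\mms{w\cdot d(1-p)}{0}$, i.e. $x^{-1}d = \mms{wdp}{wd(1-p)}{0}{0}$, which is precisely the representation of $a^{\parallel d}$ given in \eqref{matrixaII} of Lemma \ref{lemarep}. Alternatively, and perhaps more elegantly, one can argue without coordinates: note $x d = (da + t(da)^\pi)d = dad + t(da)^\pi d$, and since $(da)^\pi d = (1 - (da)(da)^\#)d$ while $(da)(da)^\# d = d$ follows from $\R(da)^\# = \R(da)$ together with $pd = d$ (equivalently from $(da)(da)^\#(da) = da$ and the kernel/image identities), one gets $(da)^\pi d = 0$, hence $xd = dad$; then left-multiplying the desired identity by $x$ reduces it to $d = d a d \cdot (da+t(da)^\pi)^{-1}\cdot (\text{nothing})$— so instead I would verify directly that $(da + t(da)^\pi)(a^{\parallel d}) = d \cdot(\text{something})$... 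The main obstacle, and the step I would be most careful about, is precisely this bookkeeping: checking that $(da)^\pi d = 0$ (so that the $t$-term drops out after right multiplication by $d$) and that $w$, the inverse of $dap$ in $p\R p$, correctly reproduces $a^{\parallel d} = wd$ from \cite[Theorem 3.1]{bb}. Once $(da)^\pi d = 0$ is in hand, $xd = dad$ and then $x^{-1}d = x^{-1}(da)d\cdot\ldots$ — more safely, from $x = da + t(da)^\pi$ and the block computation one reads off $x^{-1}d = wd = a^{\parallel d}$ immediately, so I would present the matrix computation as the primary argument and relegate the coordinate-free remark to a sentence.
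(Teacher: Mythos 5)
Your proposal follows exactly the paper's route: represent $t$, $da$, $(da)^\pi$ and $d$ with respect to $p=dd^-$ using Lemma \ref{lemarep} and Lemma \ref{lgi}, observe that $x=da+t(da)^\pi$ has zero $(2,1)$ corner, $(1,1)$ corner $dap$ (invertible in $p\R p$ with inverse $w$) and $(2,2)$ corner $-(1-p)twda(1-p)+(1-p)t(1-p)$ (your expression $-(1-p)tpwda(1-p)+(1-p)t(1-p)$ agrees with it since $pw=w$), and then obtain $x^{-1}d=\mms{wdp}{wd(1-p)}{0}{0}=wd=a^{\parallel d}$ exactly as in \eqref{matrixaII}. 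This is the paper's proof.

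The one step you should repair is the ``standard fact'' you invoke for the invertibility criterion. As you state it --- an element $x$ with $(1-p)xp=0$ is invertible in $\R$ \emph{iff} both diagonal corners are invertible in their corner rings --- it is false in a general ring: in $\B(H)$ with $H=H_1\oplus H_2$ one can put a non-surjective isometry in the $(1,1)$ corner, a non-injective co-isometry in the $(2,2)$ corner, and a rank-one $(1,2)$ corner matching kernel to cokernel, and the triangular operator is invertible. Moreover, your proposed in-line verification (exhibiting the candidate inverse $\mms{\alpha^{-1}}{-\alpha^{-1}\beta\delta^{-1}}{0}{\delta^{-1}}$) only proves the ``if'' direction, i.e.\ the sufficiency half of the theorem; the ``necessary'' half is left unjustified. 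What makes the equivalence true here is that the $(1,1)$ corner $dap$ is already known to be invertible in $p\R p$, and under that hypothesis a short extra argument closes the gap: writing $y=x^{-1}$ in block form, the $(2,1)$ entry of $yx=1$ gives $(1-p)yp\cdot dap=0$, hence $(1-p)yp=0$, and then the $(2,2)$ entries of $yx=1$ and $xy=1$ show that $(1-p)y(1-p)$ is a two-sided inverse of the $(2,2)$ corner in $(1-p)\R(1-p)$. (The paper also asserts this equivalence without proof, so the defect is small, but your stated justification does not cover it.) Incidentally, your coordinate-free aside is salvageable: $(da)^\pi d=0$ does hold, since $d=daa^{\parallel d}\in da\R$, and combined with $a^{\parallel d}\in d\R$ it yields $xa^{\parallel d}=d$ directly; but, as you conclude yourself, the block computation is the cleaner primary argument.
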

\begin{proof} Consider $t\in\R$ and represent it using the idempotent $p$, i.e., 
$$
t=\mmm{t_1}{t_2}{t_3}{t_4}.
$$
According to the matrix representations given in Lemma \ref{lemarep} and Lemma \ref{lgi},
\begin{align*}
da+t(da)^\pi &= \mmm{dap}{da(1-p)}{0}{0} + \mmm{t_1}{t_2}{t_3}{t_4}\mmm{0}{-wda(1-p)}{0}{1-p}\\
                   &= \mmm{dap}{da(1-p)-t_1wda(1-p)+ t_2}{0}{-t_3wda(1-p)+ t_4}.\\
\end{align*}
Since $dap$ is invertible in $p \R p$ (\cite[Theorem 3.1]{bb}),
$da+t(da)^\pi$ is invertible if and only if
$-t_3wda(1-p)+ t_4$ is invertible in the subring $(1-p) \R (1-p)$.
In addition, under this situation, 
$$
(da+t(da)^\pi)^{-1} = \mmm{w}{\xi}{0}{\mu},
$$
for some $\xi$ and $\mu \in \R$. Now, using the representations of  $d$ and $a^{\parallel d}$ presented in Lemma \ref{lemarep},
$$
(da+t(da)^\pi)^{-1}d = \mmm{w}{\xi}{0}{\mu} \mmm{dp}{d(1-p)}{0}{0}
= \mmm{wdp}{wd(1-p)}{0}{0} = a^{\parallel d}.
$$
\end{proof}

\indent In the case of an algebra, Theorem \ref{thm1000} particularizes as follows.
Note that if  $\K$ is a field and $\A$ is a  $\K$-algebra, then given $t\in\K$,
$tz=(t.1)z$, where $z\in\A$ and 
$1$ stands for the unit of $\A$.\par

\begin{thm}\label{thm2000}Let $\K$ be a field and consider a $\K$-algebra $\A$.
Let $a\in\A$ and $d \in \widehat{\A}$ be such that $a$ is invertible along
$d$ and consider  $t\in\K$, $t\neq 0$. Then,
$$
a^{\parallel d} = (da+t (da)^\pi)^{-1}d.
$$
\end{thm}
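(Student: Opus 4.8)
The plan is to reduce Theorem \ref{thm2000} to Theorem \ref{thm1000} by exhibiting a suitable ring element $t'\in\A$ that plays the role of the ``$t$'' in the hypothesis of Theorem \ref{thm1000}. Given the scalar $t\in\K$, $t\neq 0$, the natural candidate is $t' = t\cdot 1$, where $1$ is the unit of $\A$; indeed, as the remark preceding the statement points out, for this choice $t' z = (t\cdot 1)z = tz$ for every $z\in\A$, so $da + t'(da)^\pi = da + t(da)^\pi$ and the asserted formula $a^{\parallel d} = (da+t(da)^\pi)^{-1}d$ will follow from Theorem \ref{thm1000} \emph{provided} we check that $da + t(da)^\pi$ is invertible in $\A$.

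First I would fix an inner inverse $d^-$ of $d$ (which exists since $d\in\widehat{\A}$ and, in any case, since $a$ is invertible along $d$ forces $d$ regular), set $p = dd^-$, and let $w = (dap)^{-1}_{p\A p}$, which exists by \cite[Theorem 3.1]{bb}. Writing $t\cdot 1$ in the matrix representation with respect to $p$ gives the diagonal matrix with entries $t_1 = tp$, $t_2 = t_3 = 0$, $t_4 = t(1-p)$. According to Theorem \ref{thm1000}, invertibility of $da + t(da)^\pi$ is equivalent to invertibility of $-t_3 w\, da(1-p) + t_4 = t(1-p)$ in the subring $(1-p)\A(1-p)$. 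So the whole matter comes down to verifying that $t(1-p) = t\cdot(1-p)$ is invertible in $(1-p)\A(1-p)$, whose unit is $1-p$; its inverse there is simply $t^{-1}(1-p)$, using that $t\neq 0$ in the field $\K$ and that scalar multiplication commutes with everything. Once this is in hand, Theorem \ref{thm1000} immediately yields both the invertibility of $da + t(da)^\pi$ and the representation $a^{\parallel d} = (da + t(da)^\pi)^{-1}d$.

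I do not anticipate a serious obstacle: the only genuine point is recognizing that in a $\K$-algebra the ``off-diagonal'' perturbation term in Theorem \ref{thm1000} vanishes (because $t\cdot 1$ is central, hence has zero $(1,2)$ and $(2,1)$ corners relative to $p$), reducing the invertibility condition to the trivially satisfied statement that the nonzero scalar $t$ acts invertibly on the corner $(1-p)\A(1-p)$. One should be slightly careful that $d^-$ and hence $p$ are only auxiliary — the statement of Theorem \ref{thm2000} does not mention them, so the proof should note that the conclusion is independent of the choice of $d^-$, which is automatic since $a^{\parallel d}$ is unique and the left-hand side $(da+t(da)^\pi)^{-1}d$ was shown in Theorem \ref{thm1000} to equal it for every admissible $d^-$. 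That is essentially the entire argument; it is a direct specialization, so the write-up will be short.

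\begin{proof}
Since $a$ is invertible along $d$, the element $d$ is regular; fix an inner inverse $d^-$ of $d$ and set $p = dd^-$ and $w = (dap)^{-1}_{p\R p}$, which exists by \cite[Theorem 3.1]{bb}. Applying the remark preceding the statement, $t z = (t\cdot 1) z$ for all $z\in\A$, so the representation of $t\cdot 1$ with respect to $p$ is
$$
t\cdot 1 = \mmm{tp}{0}{0}{t(1-p)},
$$
that is, $t_3 = 0$ and $t_4 = t(1-p)$ in the notation of Theorem \ref{thm1000}. By Theorem \ref{thm1000}, $da + (t\cdot 1)(da)^\pi = da + t(da)^\pi$ is invertible in $\A$ if and only if $-t_3 w\, da(1-p) + t_4 = t(1-p)$ is invertible in the subring $(1-p)\A(1-p)$, whose unit is $1-p$. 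But $t\neq 0$ in the field $\K$, so $t(1-p)$ is invertible in $(1-p)\A(1-p)$ with inverse $t^{-1}(1-p)$. Hence $da + t(da)^\pi$ is invertible in $\A$, and Theorem \ref{thm1000} gives
$$
a^{\parallel d} = (da + t(da)^\pi)^{-1}d.
$$
\end{proof}
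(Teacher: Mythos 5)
Your proposal is correct and follows essentially the same route as the paper: both reduce to Theorem \ref{thm1000} by observing that for the central element $t\cdot 1$ the cross term in the invertibility condition vanishes (the paper notes $(1-p)w=0$, you note the $(2,1)$ corner of $t\cdot 1$ is zero — the same cancellation), leaving only the trivially invertible $t(1-p)$ in the corner subring $(1-p)\A(1-p)$. No gaps; only a cosmetic slip in writing $p\R p$ where $p\A p$ is meant.
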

\begin{proof}Let $d^-$ be an inner inverse of $d$ and let $p=dd^-$. 
As in  Theorem \ref{thm1000}, consider $w=(dap)^{-1}_{p\R p}$. 
Since $(1-p)w=0$,
$$-(1-p)twda(1-p)+ (1-p)t(1-p)= -t(1-p)wda(1-p) +t(1-p)=t(1-p).
$$
Therefore, to conclude the proof, apply Theorem \ref{thm1000}.
\end{proof}

\begin{rema}\label{remark3000}\rm Let $\R$ be a unitary ring and consider $a\in\R$ and $d\in \widehat{\R}$ be such that
$a$ is invertible along $d$. Recall that according to \cite[Theorem 7]{mary}, $da$ and $ad$ are group invertible.
Note that the results presented in   Theorem \ref{thm1000} and Theorem \ref{thm2000}
concerns the element $da$. However, considering the ring $(\R, +, \diamond)$, where $a\diamond b= ba$,  it is possible to prove similar results to the ones presented in the aforementioned Theorems considering the element $ad$. In fact, it is evident that $a$ is invertible along $d$ (respectively group invertible) in $\R$  if and only if $a$ is invertible along $d$ (respectively group invertible) in $(\R, +, \diamond)$. For example, under the same hypotheses in Theorem \ref{thm2000},
it is possible to conclude that 
$$
a^{\parallel d}= d(ad+t (ad)^\pi)^{-1}.
$$ 
The details are left to the reader.
\end{rema}

\section{Inverses along an element and limits}

In this section, several results concerning inverses along an element
and limits will be proved. \par

Throughout this section $\B$ will denote a Banach algebra or a $C^*$-algebra.
If $x\in\B$, then $\sigma (x)$ will stand for the spectrum of $x$. Note that the matrix representation (\ref{matriz}) with respect to the
idempotent $p\in\B$ also preserves  limits. In other words, 
if $(x_n)_{n\in\mathbb{N}}\subset\B$ and $x \in B$ are represented as
$$
x_n = \mmm{a_n}{b_n}{c_n}{d_n}, \qquad x=\mmm{a}{b}{c}{d},
$$
respectively, then it is not difficult to prove that
$(x_n)_{n\in\mathbb{N}}$ converges to  $x$ if and only if
$(a_n)_{n\in\mathbb{N}}$ (respectively $(b_n)_{n\in\mathbb{N}}$, $(c_n)_{n\in\mathbb{N}}$,
$(d_n)_{n\in\mathbb{N}}$) converges to $a$ (respectively $b$, $c$, $d$).
 
\indent Next the inverse along an element will be presented as a limit.\par

\begin{thm}\label{thm31}Let $\B$ be a Banach algebra and consider $a\in\B$ and $d\in\widehat{\B}$ such that $a$ is 
invertible along $d$. Then,
\begin{itemize}
\item[{\rm (i)}] $\lim_{t \to 0}(da + t 1)^{-1}d$ exists and it equals to $a^{\parallel d}$,
\item[{\rm (ii)}] $\lim_{t \to 0}d(ad + t 1)^{-1}$ exists and it equals to $a^{\parallel d}$.
\end{itemize}
\end{thm}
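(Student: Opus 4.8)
The plan is to reduce Theorem \ref{thm31} to the matrix-representation machinery already developed, in particular Theorem \ref{thm1000} and Lemma \ref{lgi}, and then to handle the limit componentwise using the remark that the representation (\ref{matriz}) preserves limits. First I would fix an inner inverse $d^-$ of $d$ (which exists since $d\in\widehat{\B}$), set $p=dd^-$, and write $a$ in the matrix form of Lemma \ref{lemarep}, so that $da=\mms{dap}{da(1-p)}{0}{0}$ and, by Lemma \ref{lgi}, $(da)^\pi=\mms{0}{-wda(1-p)}{0}{1-p}$, where $w=(dap)^{-1}_{p\R p}$. For a scalar parameter $t$ (approaching $0$ in $\mathbb{C}$ or $\mathbb{R}$), I would consider $da+t1$ rather than $da+t(da)^\pi$; the point is that $t1=t(da)(da)^\#+t(da)^\pi$, and on the ``$p$-corner'' $da$ is already invertible (in $p\R p$), so adding $t$ times the rest is a harmless perturbation for small $t$. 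Concretely, in matrix form $da+t1=\mms{dap+tp}{da(1-p)}{0}{t(1-p)}$, whose upper-left corner $dap+tp=(dap)(p+tw)$ is invertible in $p\R p$ for $|t|$ small (since $dap$ is invertible there and invertibility is an open condition), and whose lower-right corner $t(1-p)$ is invertible in $(1-p)\R(1-p)$ for $t\neq 0$.

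Next I would compute the inverse of this upper-triangular matrix: it has the form $\mms{(dap+tp)^{-1}}{\xi_t}{0}{t^{-1}(1-p)}$ for some $\xi_t$, and then multiply on the right by $d=\mms{dp}{d(1-p)}{0}{0}$ as in the proof of Theorem \ref{thm1000}. Because the second row of $d$ is zero, the product is $\mms{(dap+tp)^{-1}dp}{(dap+tp)^{-1}d(1-p)}{0}{0}$, so $(da+t1)^{-1}d=(dap+tp)^{-1}d$ and the troublesome $t^{-1}$ term never appears. As $t\to 0$, the map $t\mapsto (dap+tp)^{-1}$ is continuous at $0$ in $p\R p$ (again by openness of invertibles and continuity of inversion in a Banach algebra), so $(dap+tp)^{-1}\to (dap)^{-1}_{p\R p}=w$, and therefore $(da+t1)^{-1}d\to\mms{wdp}{wd(1-p)}{0}{0}$, which by (\ref{matrixaII}) of Lemma \ref{lemarep} is exactly $a^{\parallel d}$. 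Using that the matrix representation preserves limits finishes part (i).

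For part (ii) I would invoke Remark \ref{remark3000}: passing to the opposite multiplication $(\R,+,\diamond)$ with $a\diamond b=ba$ interchanges $da$ with $ad$ and $(da)^{-1}d$-type expressions with $d(ad)^{-1}$-type expressions, while preserving ``invertible along $d$''; since a Banach algebra with the opposite product is again a Banach algebra, part (i) applied there yields $\lim_{t\to 0}d(ad+t1)^{-1}=a^{\parallel d}$ directly. Alternatively one can redo the computation with the representation centred on the idempotent $q=d^-d$ (or use the analogue of Lemma \ref{lemarep} for $ad$), but the opposite-ring trick is cleaner.

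The main obstacle I anticipate is making precise the two continuity facts that the argument leans on: that $dap+tp$ stays invertible in the subring $p\R p$ for small $|t|$, and that its inverse depends continuously on $t$ there. These are standard for Banach algebras, but they require noting that $p\R p$ is itself a Banach algebra (a closed subalgebra, with unit $p$) so that the usual ``invertibles are open, inversion is continuous'' theorems apply inside it; one should be slightly careful that the relevant norm estimates are taken in $p\R p$, equivalently in $\B$ since $p\R p$ carries the restricted norm. Everything else is bookkeeping with the $2\times 2$ matrix representation, which the paper has already set up, and the cancellation of the $t^{-1}(1-p)$ block against the zero second row of $d$ is the key structural reason the limit exists at all.
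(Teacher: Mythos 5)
Your argument is essentially the paper's own proof: the same matrix representation relative to $p=dd^-$, the same computation showing the $t^{-1}(1-p)$ block is killed by the zero rows of $d$ so that $(da+t1)^{-1}d=w_t d$, continuity of inversion in the Banach algebra $p\B p$ to get $w_t\to (dap)^{-1}_{p\B p}$, and Remark \ref{remark3000} for part (ii). The only (harmless) difference is that you justify invertibility of $da+t1$ for small $t\neq 0$ directly from the block-triangular form with invertible diagonal corners, whereas the paper derives it from the group invertibility of $da$ and the fact that $0$ is isolated in $\sigma(da)$.
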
 
\begin{proof}(i). Let $d^-$ be an inner inverse of $d$ and consider $p=dd^-$. If $a$ and $d$ are represented 
as in Lemma \ref{lemarep}, then, given $t \in \mathbb{R}$,  
\begin{equation}\label{da+t1}
da+t1 = \mmm{dap+tp}{da(1-p)}{0}{t(1-p)}.
\end{equation}
Since $a$ is invertible along $d$, according to \cite[Theorem 7]{mary},
$da$ is group invertible. In particular, $0$ is an isolated point of  $\sigma (da)$
(\cite[Theorem 4]{King}). Then, there exists $U \subset \mathbb{C}$,
a punctured neighbourhood of 0, such that $da+t1 \in \B^{-1}$ 
for each $t \in U$. Hence, according to the representation of $da+t1$ presented above, 
$dap+tp \in (p \B p)^{-1}$ for  each $t \in U$.
Denote by $w_t$ the inverse of $dap+tp$ in $p \B p$ ($t\in U$).  Then, for each $t\in U$, there exists $\xi_t\in\B$ such that
$$
(da+t1)^{-1}d = \mmm{w_t}{\xi_t}{0}{t^{-1}(1-p)}
\mmm{dp}{d(1-p)}{0}{0} = \mmm{w_t dp}{w_t d(1-p)}{0}{0} = w_td.
$$
Thus, according to (\ref{matrixaII}), to prove the Theorem, it 
is enough to prove that $\lim_{t \to 0} w_t$ exists
and $\lim_{t \to 0} w_t = (dap)^{-1}_{p \B p}$. 

But these affirmations follow from: 
a) $w_t$ is the inverse  of $dap+tp$ in $p \B p$ and
b) $p \B p $ is a Banach algebra and 
the standard inverse is a continuous map from 
$\mathcal{G}(p\B p)$ to $\mathcal{G}(p\B p)$, where
$\mathcal{G}(p\B p)$ is the set of invertibles in $p\B p$.\par

\indent (ii). Apply Remark \ref{remark3000} to the Banach algebra $(\B, +, \diamond )$ and  statement (i).
\end{proof}

Next some special cases will be considered.
\begin{cor}Let $\B$ be a Banach algebra and consider $a \in \B$. 
\begin{itemize}
\item[{\rm (i)}] If $a$ is group invertible, then $\lim_{t \to 0}(a^2+t1)^{-1}a$ and $\lim_{t \to 0}a(a^2+t1)^{-1}$ exist and both limits equal to $a^\#$.
\item[{\rm (ii)}] If $a$ is Drazin invertible with ${\rm ind}(a) = k$, then 
$\lim_{t \to 0}(a^{k+1}+t1)^{-1}a^k$ and $\lim_{t \to 0}a^k(a^{k+1}+t1)^{-1}$ exist and both limits equal to $a^d$.
\item[{\rm (iii)}] If $\B$ is a $C^*$-algebra and $a$ is Moore-Penrose invertible, then 
$\lim_{t \to 0}(a^*a+t1)^{-1}a^*$ and $\lim_{t \to 0}a^*(aa^*+t1)^{-1}$ exist and both limits equal to $a^\dagger$.
\end{itemize}
\end{cor}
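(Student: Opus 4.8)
The plan is to derive all three items as special cases of Theorem~\ref{thm31}, by recognising each classical pseudoinverse as an inverse along a suitable element and then choosing $d^-$ so that the formula $\lim_{t\to0}(da+t1)^{-1}d = a^{\parallel d}$ (and its companion $\lim_{t\to0}d(ad+t1)^{-1}=a^{\parallel d}$) collapses to the stated expression. For (i), recall from the preliminaries that $a\in\B^\#$ if and only if $a$ is invertible along $a$, and in that case $a^{\parallel a}=a^\#$; substituting $d=a$ into Theorem~\ref{thm31} gives $\lim_{t\to0}(a\cdot a+t1)^{-1}a = \lim_{t\to0}(a^2+t1)^{-1}a = a^\#$, and likewise $\lim_{t\to0}a(a^2+t1)^{-1}=a^\#$ from part (ii) of that theorem. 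Note $a\in\widehat{\B}$ automatically here since group invertibility implies regularity.

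For (ii), $a$ Drazin invertible with $\mathrm{ind}(a)=k$ means, by \cite[Theorem 11]{mary}, that $a$ is invertible along $a^k$ and $a^{\parallel a^k}=a^d$; moreover $a^k$ is regular (indeed $a^d$ is an outer inverse witnessing this, or one checks $a^k = a^k a^d a^k$). Plugging $d=a^k$ into Theorem~\ref{thm31} immediately yields $\lim_{t\to0}(a^{k+1}+t1)^{-1}a^k = a^d$ and $\lim_{t\to0}a^k(a^{k+1}+t1)^{-1}=a^d$, since $da=a^{k+1}=ad$. For (iii), in a $C^*$-algebra $a$ Moore-Penrose invertible is equivalent to $a$ being invertible along $a^*$ with $a^{\parallel a^*}=a^\dagger$, and $a^*$ is regular because $a$ is. Taking $d=a^*$ gives $da=a^*a$ and $ad=aa^*$, so Theorem~\ref{thm31}(i) produces $\lim_{t\to0}(a^*a+t1)^{-1}a^* = a^\dagger$ and Theorem~\ref{thm31}(ii) produces $\lim_{t\to0}a^*(aa^*+t1)^{-1}=a^\dagger$.

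There is essentially no obstacle here: the work is entirely in matching notation — identifying $d$ in each case, verifying regularity of that $d$, and computing $da$ and $ad$. The only point requiring a word of care is that Theorem~\ref{thm31} is stated for a Banach algebra, so for (iii) one simply uses that a $C^*$-algebra is in particular a Banach algebra, and the involution plays no role beyond identifying $a^\dagger$ with $a^{\parallel a^*}$. I would write the proof as a three-line invocation of Theorem~\ref{thm31} together with \cite[Theorem 11]{mary}, spelling out the substitution $d\in\{a,a^k,a^*\}$ in the three cases.

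\begin{proof}
By \cite[Theorem 11]{mary}, $a$ is group invertible if and only if $a$ is invertible along $a$, with $a^{\parallel a}=a^\#$; $a$ is Drazin invertible with $\mathrm{ind}(a)=k$ if and only if $a$ is invertible along $a^k$, with $a^{\parallel a^k}=a^d$; and, in a $C^*$-algebra, $a$ is Moore-Penrose invertible if and only if $a$ is invertible along $a^*$, with $a^{\parallel a^*}=a^\dagger$. In each of these situations the relevant element $d$ (namely $a$, $a^k$ and $a^*$ respectively) is regular, since the existence of $a^{\parallel d}$ forces $d\in\widehat{\B}$. Hence Theorem~\ref{thm31} applies.

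For statement (i), take $d=a$ in Theorem~\ref{thm31}. Then $da=a^2=ad$, so $\lim_{t\to0}(a^2+t1)^{-1}a = a^{\parallel a}=a^\#$ and $\lim_{t\to0}a(a^2+t1)^{-1}=a^{\parallel a}=a^\#$. For statement (ii), take $d=a^k$; then $da=a^{k+1}=ad$, so $\lim_{t\to0}(a^{k+1}+t1)^{-1}a^k = a^{\parallel a^k}=a^d$ and $\lim_{t\to0}a^k(a^{k+1}+t1)^{-1}=a^{\parallel a^k}=a^d$. For statement (iii), a $C^*$-algebra is in particular a Banach algebra; take $d=a^*$, so that $da=a^*a$ and $ad=aa^*$. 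Theorem~\ref{thm31} then gives $\lim_{t\to0}(a^*a+t1)^{-1}a^* = a^{\parallel a^*}=a^\dagger$ and $\lim_{t\to0}a^*(aa^*+t1)^{-1}=a^{\parallel a^*}=a^\dagger$.
\end{proof}
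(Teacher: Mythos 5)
Your proof is correct and follows exactly the paper's route: the paper's own proof is the one-line "Apply \cite[Theorem 11]{mary} and Theorem \ref{thm31}", and you have simply spelled out the substitutions $d\in\{a,a^k,a^*\}$ that this invocation entails. No gaps; your verification that each such $d$ is regular (forced by the existence of $a^{\parallel d}$) is a sound, if implicit in the paper, detail.
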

\begin{proof}Apply \cite[Theoem 11]{mary} and Theorem \ref{thm31}
\end{proof}

To prove the next result, it will be useful to previously establish
a simple bound. Let $\B$ be a Banach algebra and let 
$a,b \in \B$ be invertible elements. Then,
\begin{equation*}
a^{-1}-b^{-1} = b^{-1}(b-a)a^{-1} = 
(b^{-1} - a^{-1})(b-a)a^{-1} + a^{-1}(b-a)a^{-1},
\end{equation*}
which implies
$$
\| a^{-1}-b^{-1} \| \leq 
\| b^{-1} - a^{-1} \| \| b-a \| \| a^{-1} \| + 
\| a^{-1} \|^2 \| b-a\| ,
$$
or equivalenty (if $\| b-a \| \| a^{-1} \| < 1$)
\begin{equation}\label{cota}
\| a^{-1} - b^{-1} \| \leq \frac{\| a^{-1} \|^2 \| b-a\|}
{1-\| b-a \| \|a^{-1}\|}. 
\end{equation}

Since to prove the following result an involution is needed, $C^*$-algebras
will be considered. Recall that according to \cite[Theorem 6]{HM}, given $d\in\A$,
$\A$ a $C^*$-algebra, necessary and sufficient for $d$ to be regular is that $d$ is
Moore-Penrose inversible. Thus, in this case, $d$ has  an inner inverse
$d^-$ such that $dd^-$ is Hermitian.  

\begin{thm}\label{teorema31}Let $\A$  be a $C^*$-algebra and consider $a\in\A$ and $d \in \widehat{\A}$ such 
that $a$ is invertible along $d$. Let $d^-\in\A$ be such that $d^-$ is an inner inverse of $d$  and $dd^-$ is Hermitian.
Then, for enough small $t$,
$$
\| (da+t1)^{-1}d-a^{\parallel d} \| \leq 
\frac{t \| a^{\parallel d}\|^2 \| d^- \|^2 }
{1-t \| a^{\parallel d}\| \| d^- \|} \| d \|.
$$
\end{thm}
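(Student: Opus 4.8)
The plan is to reduce everything to a perturbation estimate for the inverse inside the unital $C^*$-subalgebra $p\A p$, where $p=dd^-$. I would fix the inner inverse $d^-$ of $d$ with $p=dd^-$ Hermitian, so that $p$ is a projection; one may assume $d\neq0$, since otherwise $a^{\parallel d}=0$ and $(da+t1)^{-1}d=0$ and the inequality is trivial, and then $\|p\|=1$. Put $w=(dap)^{-1}_{p\A p}$, which exists because $a$ is invertible along $d$ (\cite[Theorem 3.1]{bb}). From the identity $dap\,(p+tw)=dap+tp$ one sees that $dap+tp$ is invertible in $p\A p$ whenever $t\|w\|<1$; write $w_t$ for its inverse. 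Arguing exactly as in the proof of Theorem \ref{thm31}, with the matrix representations of Lemma \ref{lemarep}, I get $(da+t1)^{-1}d=w_td$, while by (\ref{matrixaII}) we have $a^{\parallel d}=wd$. Therefore
\begin{equation*}
\|(da+t1)^{-1}d-a^{\parallel d}\|=\|(w_t-w)d\|\le\|w_t-w\|\,\|d\|,
\end{equation*}
so it remains to bound $\|w_t-w\|$.

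The main point is an algebraic identity: since $w\in p\A p$ one has $w=wp=wdd^-=a^{\parallel d}d^-$, and hence $\|w\|\le\|a^{\parallel d}\|\,\|d^-\|$. With this in hand I would apply the bound (\ref{cota}) inside the Banach algebra $p\A p$ (whose norm is the restriction of that of $\A$) to the invertible elements $dap$ and $dap+tp$: here the perturbation is $tp$, of norm $t$, so for $t\|w\|<1$ formula (\ref{cota}) gives
\begin{equation*}
\|w_t-w\|\le\frac{\|w\|^2\,t}{1-t\|w\|}.
\end{equation*}

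To finish, note that $s\mapsto s^2t/(1-ts)$ is nondecreasing on $[0,1/t)$, so choosing $t$ small enough that $t\|a^{\parallel d}\|\,\|d^-\|<1$ (which in particular forces $t\|w\|<1$) and inserting $\|w\|\le\|a^{\parallel d}\|\,\|d^-\|$ yields
\begin{equation*}
\|w_t-w\|\le\frac{t\|a^{\parallel d}\|^2\|d^-\|^2}{1-t\|a^{\parallel d}\|\,\|d^-\|},
\end{equation*}
and multiplying by $\|d\|$ gives the claimed inequality. I expect the only genuinely delicate step to be the identity $w=a^{\parallel d}d^-$, which is what turns the a priori uncontrolled subring-inverse $w$ into the explicit quantity on the right-hand side; the rest is the Neumann-type estimate (\ref{cota}) transplanted to $p\A p$ together with the bookkeeping already done for Theorem \ref{thm31}.
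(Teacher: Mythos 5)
Your proposal is correct and follows essentially the same route as the paper: reduce to the estimate $\|(da+t1)^{-1}d-a^{\parallel d}\|\le\|w_t-w\|\,\|d\|$ via the matrix representations, use the identity $w=a^{\parallel d}d^-$ to bound $\|w\|$, and apply (\ref{cota}) in $p\A p$. The only cosmetic difference is that you get invertibility of $dap+tp$ from the factorization $dap(p+tw)$ rather than from the isolated-spectral-point argument quoted in Theorem \ref{thm31}, which changes nothing essential.
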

\begin{proof} 
Let $t > 0$, $p=dd^-$, and denote by $w$ the inverse of $dap$ in $p\B p$ (\cite[Theorem 3.1]{bb}).
According to  the proof of Theorem \ref{thm31},
there exists $U \subset \mathbb{C}$,
a punctured neighbourhood of 0, such that $da+t1 \in \B^{-1}$ 
for each $t \in U$. Moreover,  if $w_t$ denotes the inverse of $dap+tp$ in $p \B p$ ($t\in U$) (see Theorem \ref{thm31}), then, according 
to  Lemma \ref{lemarep} and the proof of Theorem \ref{thm31},
$$
(da+t1)^{-1}d-a^{\parallel d} = 
\mmm{(w_t -w) dp}{(w_t - w)d(1-p)}{0}{0} = (w_t-w)dp.
$$
Since $p=dd^-$ is a Hermitian idempotent, $\|p\|=1$, and thus,
$$
\| (da+t1)^{-1}d-a^{\parallel d} \| \leq \| w_t-w \| \| d\|.
$$
Note that from (\ref{matrixaII}),  $a^{\parallel d} = wd$. Hence
$a^{\parallel d}d^- = wdd^- = wp = w$. Thus, $\| w \| \leq \| a^{\parallel d}\| \|d^-\|$.
Now, since $w$ and $w_t$ are invertible in $p\R p$ ($t\in U$), if 
$t \in U$ and $t\| a^{\parallel d}\| \| d^- \| < 1$, then from (\ref{cota}) can be deduced that
$$
\| w - w_t \| \leq \frac{t \| w\|^2 }{1-t\| w\|} \leq 
\frac{t \| a^{\parallel d}\|^2 \| d^- \|^2 }
{1-t \| a^{\parallel d}\| \| d^- \|}.
$$
\end{proof}

Considering the Banach algebra $(\B, +, \diamond )$, the following Corollary can be deduced.  
Note  that according to \cite[Theorem 6]{HM}, given $d\in\widehat{\A}$,
$\A$ a $C^*$-algebra, $d$ has  an inner inverse
$d^-$ such that $d^-d$ is Hermitian.

\begin{cor}Let $\A$  be a $C^*$-algebra and consider $a\in\A$ and $d \in \widehat{\A}$ such 
that $a$ is invertible along $d$. Let $d^-\in\A$ be such that $d^- $ is an inner inverse of $d$  and $d^- d$ is Hermitian.
Then, for enough small $t $,
$$
\|d (ad+t1)^{-1}-a^{\parallel d} \| \leq 
\frac{t \| a^{\parallel d}\|^2 \| d^- \|^2 }
{1-t \| a^{\parallel d}\| \| d^- \|} \| d \|.
$$
\end{cor}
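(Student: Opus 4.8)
The plan is to obtain this Corollary from Theorem \ref{teorema31} by passing to the opposite algebra, in the same spirit as Remark \ref{remark3000} and the proof of Theorem \ref{thm31}(ii). The first step is to observe that $\A^{\mathrm{op}}:=(\A,+,\diamond)$, where $a\diamond b=ba$, is again a $C^*$-algebra with the same norm and the same involution $*$. Indeed, the ring axioms are routine, $*$ remains an involution since $(a\diamond b)^*=(ba)^*=a^*b^*=b^*\diamond a^*$, the norm is unchanged so it is still a complete submultiplicative norm, and the $C^*$-identity persists because in $\A^{\mathrm{op}}$ one has $a^*\diamond a=aa^*$ and $\|aa^*\|=\|a\|^2$.

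Next I would translate all the data to $\A^{\mathrm{op}}$. An element $b$ satisfies $b\diamond a\diamond b=b$, $b\diamond\A=d\diamond\A$, $\A\diamond b=\A\diamond d$ in $\A^{\mathrm{op}}$ exactly when $bab=b$, $\A b=\A d$, $b\A=d\A$ in $\A$; hence $a$ is invertible along $d$ in $\A^{\mathrm{op}}$ with the very same inverse $a^{\parallel d}$. Similarly $d\diamond d^-\diamond d=dd^-d=d$ and $d\diamond d^-=d^-d$, so $d^-$ is an inner inverse of $d$ in $\A^{\mathrm{op}}$ with $d\diamond d^-$ Hermitian, which is precisely the hypothesis that $d^-d$ is Hermitian. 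Thus the hypotheses of Theorem \ref{teorema31} hold in $\A^{\mathrm{op}}$; note that such a $d^-$ always exists by \cite[Theorem 6]{HM}, as recalled just before the statement.

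Finally I would read off the conclusion. Invertibility in $\A^{\mathrm{op}}$ coincides with invertibility in $\A$ (with the same inverse), the element ``$da$'' appearing in Theorem \ref{teorema31} becomes $d\diamond a=ad$, so ``$da+t1$'' becomes $ad+t1$ and ``$(da+t1)^{-1}d$'' becomes $(ad+t1)^{-1}\diamond d=d(ad+t1)^{-1}$, while $a^{\parallel d}$, $\|a^{\parallel d}\|$, $\|d^-\|$ and $\|d\|$ are all unchanged. Hence Theorem \ref{teorema31} applied in $\A^{\mathrm{op}}$ gives, for small $t>0$,
$$
\|d(ad+t1)^{-1}-a^{\parallel d}\|\leq\frac{t\|a^{\parallel d}\|^2\|d^-\|^2}{1-t\|a^{\parallel d}\|\|d^-\|}\|d\|,
$$
which is the desired inequality. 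There is essentially no obstacle; the only point requiring care is the bookkeeping of the translation (products reverse, so $dd^-$ turns into $d^-d$ and $(da+t1)^{-1}d$ turns into $d(ad+t1)^{-1}$) together with the verification that $\A^{\mathrm{op}}$ really is a $C^*$-algebra, so that Theorem \ref{teorema31} is applicable.
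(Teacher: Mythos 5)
Your proposal is correct and is essentially the paper's own argument: the paper likewise deduces the corollary by passing to the opposite algebra $(\A,+,\diamond)$ (as in Remark \ref{remark3000}) and invoking Theorem \ref{teorema31}, with your write-up merely spelling out the verification that the opposite algebra is again a $C^*$-algebra and the bookkeeping $d\diamond d^-=d^-d$, $(da+t1)^{-1}d\mapsto d(ad+t1)^{-1}$.
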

\begin{proof}
Apply Remark \ref{remark3000} to the Banach algebra $(\B, +, \diamond )$ and Theorem \ref{thm31}.
\end{proof}

Next it will be characterized when $\lim_{t \to 0}(da + t1)^{-1}f$ exists, where $f\in\B$.

\begin{thm}\label{thm6000}Let $\B$ be a Banach algebra and consider $a$, $d$ and $f \in \B$. Assume that $d$ is regular.
\begin{itemize}
\item[{\rm (i)}] If $0$ is not a limit point of $\sigma (da)$ and  $\lim_{t \to 0}(da+t1)^{-1}f$ exists, then $f \in d\B$. 
\item[{\rm (ii)}] If $f \in d\B$ and $a$ is invertible along $d$, then 
there exists $\lim_{t \to 0}(da+t1)^{-1}f$ and it
equals to $a^{\parallel d}d^-f$, where $d^-$ is an arbitrary
inner inverse of $d$.
\end{itemize}
\end{thm}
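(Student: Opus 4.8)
The plan is to treat the two implications separately; both are short, and part~(ii) will be reduced directly to Theorem~\ref{thm31}(i). For (i), the first step is to note that the hypothesis that $0$ is not a limit point of $\sigma(da)$ is exactly what makes $da+t1$ invertible for every $t$ in some punctured neighbourhood of $0$, so that $(da+t1)^{-1}f$ is defined there and the limit is meaningful. Writing $L=\lim_{t\to 0}(da+t1)^{-1}f$, I would then start from $f=(da+t1)(da+t1)^{-1}f=da\,(da+t1)^{-1}f+t\,(da+t1)^{-1}f$ and let $t\to 0$: left multiplication by $da$ is continuous, and the term $t\,(da+t1)^{-1}f$ tends to $0$ because $(da+t1)^{-1}f$ stays bounded near $0$ (it converges to $L$). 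This gives $f=daL$, hence $f=d(aL)\in d\B$.

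For (ii), the key preliminary observation is that $f\in d\B$, say $f=dh$, implies $f=dd^-f=d(d^-f)$ for \emph{any} inner inverse $d^-$ of $d$ (using $dd^-d=d$). Since $a$ is invertible along $d$, the element $d$ is regular and, by \cite[Theorem 7]{mary}, $da$ is group invertible, so again $(da+t1)^{-1}$ is defined for $t$ in a punctured neighbourhood of $0$. The plan is then merely to factor $(da+t1)^{-1}f=\big((da+t1)^{-1}d\big)\,d^-f$ and apply Theorem~\ref{thm31}(i), which yields $\lim_{t\to 0}(da+t1)^{-1}d=a^{\parallel d}$; passing to the limit gives $\lim_{t\to 0}(da+t1)^{-1}f=a^{\parallel d}d^-f$. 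The independence of the right-hand side from the choice of $d^-$ is automatic since the limit is unique, but it can also be seen directly: $\R a^{\parallel d}=\R d$ forces $a^{\parallel d}d^-d=a^{\parallel d}$, whence $a^{\parallel d}d^-f=a^{\parallel d}d^-dh=a^{\parallel d}h$.

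There is no genuinely hard step; the only points requiring a little care are verifying, in each part, that $t\mapsto(da+t1)^{-1}f$ is defined on a punctured neighbourhood of $0$ (this is the sole role of the spectral hypothesis in (i) and of the group invertibility of $da$ in (ii)), and confirming in (i) that the $t$-linear remainder term really vanishes in the limit.
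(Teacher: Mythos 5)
Your proof is correct, but it follows a genuinely different route from the paper's. For (i) the paper represents $(da+t1)^{-1}$ as an upper triangular matrix with respect to the idempotent $p=dd^-$, obtaining the identity $(da+t1)^{-1}f = w_t f + \xi_t f + t^{-1}(1-p)f$, and reads off $(1-p)f=0$ from the blow-up of the corner term $t^{-1}(1-p)f$; you instead multiply the convergent quantity back by $da+t1$, getting $f = da\,L + t\,(da+t1)^{-1}f \to daL$, which is more elementary (no inner inverse or matrix representation is needed, so the regularity of $d$ plays no role in your (i)) and even gives the slightly stronger conclusion $f\in da\B\subset d\B$. For (ii) the paper redoes the $w_t$ computation and invokes continuity of inversion in $p\B p$ together with $a^{\parallel d}d^-=(dap)^{-1}_{p\B p}$, whereas you factor $(da+t1)^{-1}f=\bigl((da+t1)^{-1}d\bigr)d^-f$ via $f=dd^-f$ and quote Theorem \ref{thm31}(i); this is a clean reduction that avoids repeating the block computation, and your verification that the answer does not depend on the choice of $d^-$ (via $a^{\parallel d}d^-d=a^{\parallel d}$, which follows from $\R a^{\parallel d}=\R d$) is also correct. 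The paper's approach has the side benefit of producing the explicit resolvent formula that it reuses elsewhere, but as a proof of this theorem your argument is complete and arguably simpler.
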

\begin{proof}Recall that if $a$ is invertible along $d$, then according to \cite[Theorem 7]{mary}, $da$ is group invertible. Thus,
according to \cite[Theorem 4]{King}, $0$ is not a limit point of $\sigma (da)$. Therefore, according to the hypotheses of statements (i) and (ii),
there exists $U$, a punctured neighbourhood of $0$ such that $dap+t1 \in \B^{-1}$ ($t\in U$). Let $d^-$ be an inner inverse of $d$ and let $p=dd^-$. In addition, represent $a$ and $d$ as in Lemma \ref{lemarep}. Hence, as in the proof of Theorem \ref{thm31}, $dap+tp \in  (p\B p)^{-1}$ for $t \in U$.
Let $w_t$ be the inverse of $dap+tp$ in $p \B p$. According to  
(\ref{da+t1}), there exist $\xi_t \in p\B (1-p)$ such that, for each $t\in U$,
$$
(da+t1)^{-1} = \mmm{w_t}{\xi_t}{0}{t^{-1}(1-p)}.
$$
In particular, 
\begin{equation}\label{daf}
(da+t1)^{-1}f = w_t f + \xi_t f + t^{-1}(1-p)f. 
\end{equation}

(i). Since  $\lim_{t \to 0}(da+t1)^{-1}f$ exists, 
$\lim_{t \to 0} (1-p)(da+t1)^{-1}f$ exists. But (\ref{daf}) implies that
$(1-p)(da+t1)^{-1}f = t^{-1}(1-p)f$. Therefore, 
$(1-p)f=0$, equivalently $f = pf = dd^-f \in d\B$.

(ii). Since $f \in d\B$, $pf = f$ and $(1-p)f=0$. Hence, (\ref{daf}) implies that
$(da+t1)^{-1}f = w_t f$. Since $a$ is invertible along $d$, 
$dap$ is invertible in $p \B p$ (\cite[Theorem 3.1]{bb}).  
As in the proof of Theorem \ref{thm31}, the continuity of the standard inverse in $p\B p$
implies that  $\lim_{t \to 0}w_t = (dap)^{-1}_{p \B p}$.
However, according to (\ref{matrixaII}),
$$
a^{\parallel d}d^- = (dap)^{-1}_{p \B p}dd^- = 
(dap)^{-1}_{p \B p}p = (dap)^{-1}_{p \B p}.$$
\end{proof}

\begin{rema}\rm
If $a,d,f$ are elements in a Banach algebra such that 
$0$ is not a limit point of $\sigma(da)$ and $\lim_{t \to 0}(da+t1)^{-1}f$ exists, then it is not possible to conclude
that that $a$ is invertible along $d$. For example, take $f=0$.	
\end{rema}

The following Theorem will state the symmetric version of Theorem \ref{thm6000}.

\begin{thm}Let $\B$ be a Banach algebra and consider $a$, $d$ and $f \in \B$. Assume that $d$ is regular.
\begin{itemize}
\item[{\rm (i)}] If $0$ is not a limit point of $\sigma (ad)$ and  $\lim_{t \to 0}f(ad+t1)^{-1}$ exists, then $f \in \B d$. 
\item[{\rm (ii)}] If $f \in \B d$ and $a$ is invertible along $d$, then 
$\lim_{t \to 0}f(ad+t1)^{-1}$ exists and it equals to $fd^-a^{\parallel d}$, where 
$d^-$ is an arbitrary inner inverse of $d$.
	\end{itemize}
\end{thm}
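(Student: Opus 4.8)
The plan is to mirror the proof of Theorem~\ref{thm6000} after passing to the opposite ring $(\B,+,\diamond)$ with $a\diamond b = ba$, exactly as suggested by Remark~\ref{remark3000}. Indeed, multiplication in $(\B,+,\diamond)$ reverses products, so the element "$ad+t1$" computed in $\B$ becomes "$da+t1$" computed in $(\B,+,\diamond)$, the expression $f(ad+t1)^{-1}$ in $\B$ becomes $(da+t1)^{-1}\diamond f$ in the opposite algebra, the right ideal $d\B$ becomes the left ideal $\B d$, and the spectrum of $ad$ in $\B$ coincides with the spectrum of $da$ in $(\B,+,\diamond)$. As noted in the excerpt, $a$ is invertible along $d$ in $\B$ if and only if it is so in $(\B,+,\diamond)$, and $d$ is regular in $\B$ if and only if it is regular in the opposite algebra (an inner inverse $d^-$ of $d$ in $\B$ is again an inner inverse of $d$ in $(\B,+,\diamond)$). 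Hence applying Theorem~\ref{thm6000} verbatim inside $(\B,+,\diamond)$ yields both statements, with the limit in statement (ii) equal to the element $a^{\parallel d}\diamond d^- \diamond f$ computed in the opposite algebra, which unwinds to $f d^- a^{\parallel d}$ in $\B$; one also uses Remark~\ref{remark1} or a direct check that $a^{\parallel d}$ in $\B$ agrees with the inverse along $d$ computed in $(\B,+,\diamond)$.

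Alternatively, if one prefers a self-contained argument, I would repeat the matrix-representation computation directly. Fix an inner inverse $d^-$ of $d$, but now set $q = d^- d$ (a left-sided idempotent) and use the representation of $a$ and $d$ with respect to $q$; the analogue of Lemma~\ref{lemarep} gives $d = \mms{qd}{0}{(1-q)d}{0}$ so that $ad = \mms{aqd}{0}{a(1-q)d \text{-part}}{0}$ sits in the first column, and $a$ invertible along $d$ forces the relevant corner of $aqd$ to be invertible in $q\B q$. Then for $t$ in a punctured neighbourhood $U$ of $0$ — which exists because $ad$ is group invertible by \cite[Theorem 7]{mary} and $0$ is isolated in $\sigma(ad)$ by \cite[Theorem 4]{King} — the element $ad+t1$ is invertible with $(ad+t1)^{-1} = \mms{w_t}{0}{\eta_t}{t^{-1}(1-q)}$ for some $\eta_t$ and $w_t$ the inverse of the $(1,1)$-corner of $ad+tq$ in $q\B q$. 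Writing $f = \mms{f_1}{f_2}{f_3}{f_4}$, one computes $f(ad+t1)^{-1} = f_1 w_t + f_3 w_t + \cdots + t^{-1} f_{\bullet}(1-q)$, and the divergent term $t^{-1} f(1-q)$ is annihilated precisely when $f(1-q)=0$, i.e. $f = fd^- d \in \B d$; this is part~(i). For part~(ii), $f\in\B d$ kills that term, the remaining part converges by continuity of inversion in the Banach algebra $q\B q$ to $f$ times the $q\B q$-inverse of the corner of $ad$, and identifying that limit with $fd^- a^{\parallel d}$ uses the analogue of \eqref{matrixaII}, namely $a^{\parallel d} = d(ad\cdot q)^{-1}_{q\B q}$ in the opposite-ring form, so that $d^- a^{\parallel d} = (1-q)\text{-free part}$ reduces to the $q\B q$-inverse.

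I would present the first (opposite-algebra) approach as the actual proof, since it is short and matches the paper's own stated strategy in Remark~\ref{remark3000} and in the proof of Theorem~\ref{thm31}(ii). The only genuinely delicate point — and the step I expect to be the main obstacle in a careful write-up — is bookkeeping the direction of all ideals, idempotents ($dd^-$ versus $d^-d$), and products under the anti-isomorphism $\B \to (\B,+,\diamond)$, so that "$d^-$ arbitrary inner inverse" and the formula $fd^- a^{\parallel d}$ come out on the correct side; once that dictionary is fixed, no new estimate or idea beyond Theorem~\ref{thm6000} is needed. A one-line proof therefore suffices: \emph{Apply Theorem~\ref{thm6000} to the Banach algebra $(\B,+,\diamond)$, where $a\diamond b = ba$.}
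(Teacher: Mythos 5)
Your primary argument is exactly the paper's own proof: the paper disposes of this theorem in one line by applying Remark \ref{remark3000} (the opposite algebra $(\B,+,\diamond)$) together with Theorem \ref{thm6000}, and your dictionary of how products, ideals, spectra and the inverse along $d$ transfer under the anti-isomorphism is correct, yielding the limit $fd^-a^{\parallel d}$ as stated. No gap; the alternative self-contained computation you sketch is unnecessary but consistent with the same idea.
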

\begin{proof}Apply Remark \ref{remark3000} to the Banach algebra $(\B, +, \diamond)$ and Theorem \ref{thm6000}.

\end{proof}

The following representation extends the one given in \cite{show}. 
The case of the Moore-Penrose inverse in $C^*$-algebras was studied in \cite[Example 3.6]{koliha}.

\begin{thm}\label{ext_show}Let $\B$ be a Banach algebra and consider $a\in \B$  and $d\in \widehat{\B}$. Let $d^-$ be 
any inner inverse of $d$ and $p=dd^-$. Then, If exists $\beta \in \mathbb{R} \setminus \{0\}$
such that $\| p - \beta dap \| < 1$, then
$a$ is invertible along $d$ and 
$$
a^{\parallel d} = \beta \sum_{n=0}^\infty (1-\beta da)^n d = \beta 
\sum_{n=0}^\infty d(1-\beta ad)^n.
$$
\end{thm}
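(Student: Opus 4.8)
The plan is to read the hypothesis as a Neumann-series condition inside the unital Banach subalgebra $p\B p$ and then to push the factor $d$ through the resulting series. First I would note that $p\B p$ is a Banach algebra with identity $p$ and that $dap\in p\B p$ (since $pd=dd^-d=d$ gives $p(dap)p=dap$); the assumption $\|p-\beta dap\|<1$ says exactly that $\beta dap$ is invertible in $p\B p$, with the norm-convergent expansion
$$
(\beta dap)^{-1}_{p\B p}=\sum_{n=0}^\infty (p-\beta dap)^n .
$$
Hence $dap$ is invertible in $p\B p$, so by \cite[Theorem 3.1]{bb} (with $p=dd^-$) the element $a$ is invertible along $d$, and reading off $a^{\parallel d}$ from that theorem and from the representation in Lemma \ref{lemarep} (see (\ref{matrixaII})) gives, by continuity of right multiplication by $d$,
$$
a^{\parallel d}=(dap)^{-1}_{p\B p}\,d=\beta\Bigl(\sum_{n=0}^\infty (p-\beta dap)^n\Bigr)d=\beta\sum_{n=0}^\infty (p-\beta dap)^n d .
$$

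The heart of the matter is then the identity $(p-\beta dap)^n d=(1-\beta da)^n d$ for all $n\ge 0$, which I would establish by induction, carrying along the auxiliary fact $p\bigl[(1-\beta da)^n d\bigr]=(1-\beta da)^n d$. The case $n=0$ is just $pd=d$. For the step, put $z=(1-\beta da)^n d$, assume $pz=z$ and $(p-\beta dap)^n d=z$; using $pd=d$ (hence $pda=da$) one gets $(p-\beta dap)z=pz-\beta da(pz)=z-\beta daz=(1-\beta da)z$ and $p\bigl[(1-\beta da)z\bigr]=pz-\beta(pda)z=z-\beta daz=(1-\beta da)z$, which closes the induction. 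Substituting this into the last display yields the first claimed formula $a^{\parallel d}=\beta\sum_{n\ge 0}(1-\beta da)^n d$.

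For the second formula I would avoid passing to the opposite algebra and instead use the elementary identity $(1-\beta da)^n d=d(1-\beta ad)^n$, valid for every $n\ge0$: it is trivial for $n=0$, holds for $n=1$ since both sides equal $d-\beta dad$, and propagates via $(1-\beta da)^{n+1}d=(1-\beta da)\bigl(d(1-\beta ad)^n\bigr)=\bigl((1-\beta da)d\bigr)(1-\beta ad)^n=\bigl(d(1-\beta ad)\bigr)(1-\beta ad)^n=d(1-\beta ad)^{n+1}$. Since the first series converges, so does $\beta\sum_{n\ge 0}d(1-\beta ad)^n$, to the same value, giving $a^{\parallel d}=\beta\sum_{n\ge 0}d(1-\beta ad)^n$. (Alternatively one could invoke Remark \ref{remark3000} and apply the first formula in $(\B,+,\diamond)$, but that forces one to recheck that the hypothesis transfers, so the direct shift is cleaner.)

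No step is genuinely difficult; the only points that deserve a little care are that the Neumann expansion is legitimate in $p\B p$ even though $\|p\|$ need not be $1$ (inverting $x$ from $\|e-x\|<1$ in a unital Banach algebra does not use $\|e\|=1$), that $\beta\ne0$ is what lets one pass between $\beta dap$ and $dap$, and that $a^{\parallel d}=(dap)^{-1}_{p\B p}d$ is correctly quoted from \cite[Theorem 3.1]{bb}.
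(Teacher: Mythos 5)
Your proposal is correct and follows essentially the same route as the paper: the Neumann series for $\beta dap$ in the unital Banach algebra $p\B p$, the appeal to \cite[Theorem 3.1]{bb} for invertibility along $d$ and the formula $a^{\parallel d}=(dap)^{-1}_{p\B p}d$, and then pushing $d$ through the series. The only (harmless) differences are that you verify $(1-\beta da)^n d=(p-\beta dap)^n d$ by a direct induction using $pd=d$, where the paper reads it off from the $2\times 2$ representation of Lemma \ref{lemarep}, and you obtain the second series via the identity $(1-\beta da)^n d=d(1-\beta ad)^n$ rather than the paper's appeal to symmetry.
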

\begin{proof} Since the second equality can be derived form the first one by symmetry, only the first equality will be proved.\par
Since $dap \in p\B p$ and $\| p - \beta dap \|< 1$, 
the element $\beta dap$ is invertible in $p \B p$ and the 
serie $\sum_{n=0}^{\infty}(p - \beta dap)^n$
converges to the inverse of $\beta dap$ in $p \B p$.
In particular, according to \cite[Theorem 3.1]{bb},
$a$ is invertible along $d$ and $( \beta dap )^{-1}_{p \B p}= \beta^{-1}( dap )^{-1}_{p \B p}$.\par

Next represent $a$ and $d$ as in Lemma \ref{lemarep}.
Since,
$$
1-\beta da = \mmm{p-\beta dap}{-\beta da(1-p)}{0}{1-p},
$$
  there exists
a sequence $(\xi_n)_{n\in\mathbb N} \subset p \B (1-p)$ such that 
$$
(1 - \beta da)^n = \mmm{(p-\beta dap)^n}{\xi_n}{0}{1-p}.
$$
Now, 
\begin{equation*}
\begin{split}
(1 - \beta da)^n d & = \mmm{(p - \beta dap)^n}{\xi_n}{0}{1-p} 
\mmm{dp}{d(1-p)}{0}{0} \\
& = \mmm{(p - \beta dap)^n dp }{(p - \beta dap)^n d(1-p) }{0}{0} = (p - 
\beta dap)^n d.
\end{split}
\end{equation*}
However, according to \cite[Theorem 3.1]{bb},
$$
\sum_{n=0}^\infty (1 - \beta da)^n d = 
\sum_{n=0}^\infty (p - \beta dap)^n d = 
\left( \beta dap \right)^{-1}_{p \B p} d = \beta^{-1} a^{\parallel d}.
$$
\end{proof}

It is well known that the Moore-Penrose inverse of a 
complex $m \times n$ matrix $A$ is given by $A^\dag = 
\int_0^\infty \exp(-A^*At) A^* {\rm d}t$ (for a very simple
proof, see \cite[Problem 73.2]{ams}). This integral representation was
extended to $C^*$-alegbras in \cite[Example 3.5]{koliha}.
There is a similar representation for the Drazin inverse
(see \cite{castroetalia}). The next Theorem will generalize these results.

\begin{thm}\label{thm7000}Let $\B$ be a Banach algebra and consider $a\in \B$ and $d\in\widehat{\B}$
such that $a$ is invertible along $d$ and $\sigma (da) \setminus \{0 \} \subset 
\{ z \in \mathbb{C} : {\rm Re }(z) > 0 \}$. Then
$$
\int_0^\infty \exp(-tda) \ d \ {\rm d}t = a^{\parallel d}.
$$
\end{thm}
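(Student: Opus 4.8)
The plan is to reduce the integral to a computation in the corner ring $p\B p$, where $p=dd^-$ for a fixed inner inverse $d^-$ of $d$, using the matrix representation of Lemma \ref{lemarep} and the fact that $da$ is group invertible with $0$ not a limit point of $\sigma(da)$. First I would note that the spectral hypothesis $\sigma(da)\setminus\{0\}\subset\{\,\mathrm{Re}(z)>0\,\}$ together with the group invertibility of $da$ (\cite[Theorem 7]{mary}) guarantees that the integrand decays exponentially: writing $q=1-(da)^\pi$ for the spectral idempotent associated to the nonzero part of the spectrum, one has $\exp(-tda)=\exp(-tda)q+(da)^\pi$, and since $\sigma(da\,q)$ restricted to the subalgebra $q\B q$ lies in the open right half-plane, the classical estimate gives $\|\exp(-tda)q\|\le C e^{-\alpha t}$ for some $\alpha>0$; the ``$(da)^\pi$ part'' will be killed when multiplied by $d$ on the right, because $(da)^\pi d=0$ (indeed $dad(d^-) $-type manipulations, or directly from the matrix form in Lemma \ref{lgi}, show $(da)^\pi d=0$). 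Hence the integrand equals $\exp(-tda)q\,d$, which is integrable, so the integral converges absolutely.

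Next I would pass to the matrix representation with respect to $p=dd^-$. By Lemma \ref{lemarep}, $d=\mms{dp}{d(1-p)}{0}{0}$ and $da=\mms{dap}{da(1-p)}{0}{0}$, and $dap$ is invertible in $p\B p$ with inverse $w=(dap)^{-1}_{p\B p}$ (by \cite[Theorem 3.1]{bb}). A power-series/induction argument of the same shape as in the proof of Theorem \ref{ext_show} shows that $\exp(-tda)=\mms{\exp(-t\,dap)}{\eta_t}{0}{1-p}$ for a suitable $\eta_t\in p\B(1-p)$ (here $\exp(-t\,dap)$ is the exponential computed inside $p\B p$), and therefore, exactly as in that proof,
\[
\exp(-tda)\,d=\mmm{\exp(-t\,dap)\,dp}{\exp(-t\,dap)\,d(1-p)}{0}{0}.
\]
Integrating entrywise (the representation respects limits, hence integrals), the problem reduces to showing $\int_0^\infty \exp(-t\,dap)\,\mathrm{d}t = w$ in $p\B p$; multiplying the resulting element by $d$ on the right then gives $w d$, which is $a^{\parallel d}$ by \eqref{matrixaII}.

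Finally, the identity $\int_0^\infty \exp(-t\,dap)\,\mathrm{d}t=(dap)^{-1}_{p\B p}$ is the standard fact that in a unital Banach algebra an element $x$ all of whose spectrum lies in the open right half-plane satisfies $\int_0^\infty \exp(-tx)\,\mathrm{d}t=x^{-1}$; one proves it by noting $\frac{d}{ds}\bigl(-x^{-1}\exp(-sx)\bigr)=\exp(-sx)$ and letting $s\to\infty$, the boundary term vanishing by the exponential decay. Applied in $p\B p$ with $x=dap$ — whose spectrum in $p\B p$ is $\sigma(da)\setminus\{0\}$ (or $\{0\}$-free part), hence contained in the open right half-plane by hypothesis — this yields $w$. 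The main obstacle is the bookkeeping around the idempotent: one must be careful that $\exp(-tda)$ genuinely has the claimed $2\times2$ triangular form with the $(1,1)$ entry being the $p\B p$-exponential of $dap$ (not merely $p\exp(-tda)p$), and that the off-diagonal and $(1-p)$-block contributions die after right multiplication by $d$; once that structural point is in place, the convergence and the scalar-type integral identity are routine.

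\begin{proof}
Let $d^-$ be an inner inverse of $d$ and $p=dd^-$. Since $a$ is invertible along $d$, by \cite[Theorem 7]{mary} $da\in\B^\#$, and by \cite[Theorem 4]{King} $0$ is an isolated point of $\sigma(da)$; also $dap$ is invertible in $p\B p$ with $w:=(dap)^{-1}_{p\B p}$ by \cite[Theorem 3.1]{bb}, and $a^{\parallel d}=wd$ by \eqref{matrixaII}. Represent $a$ and $d$ as in Lemma \ref{lemarep}. Since
$$
da=\mmm{dap}{da(1-p)}{0}{0},
$$
an induction (as in the proof of Theorem \ref{ext_show}) shows that for every $n\ge0$ there is $\eta_n\in p\B(1-p)$ with
$$
(da)^n=\mmm{(dap)^n}{\eta_n}{0}{0}\quad(n\ge1),\qquad (da)^0=1=\mmm{p}{0}{0}{1-p},
$$
so that for each $t\in\mathbb{R}$,
$$
\exp(-tda)=\sum_{n=0}^\infty\frac{(-t)^n}{n!}(da)^n=\mmm{\exp_{p\B p}(-t\,dap)}{\eta_t}{0}{1-p}
$$
for some $\eta_t\in p\B(1-p)$, where $\exp_{p\B p}$ denotes the exponential in the Banach algebra $p\B p$. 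Multiplying on the right by $d=\mms{dp}{d(1-p)}{0}{0}$ gives
$$
\exp(-tda)\,d=\mmm{\exp_{p\B p}(-t\,dap)\,dp}{\exp_{p\B p}(-t\,dap)\,d(1-p)}{0}{0}=\exp_{p\B p}(-t\,dap)\,d.
$$

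By hypothesis $\sigma_{p\B p}(dap)=\sigma(da)\setminus\{0\}\subset\{z:\mathrm{Re}(z)>0\}$ (the nonzero spectral points of $da$ coincide with those of $dap$ since $da=\mms{dap}{da(1-p)}{0}{0}$). Hence, by the spectral radius formula applied to $\exp_{p\B p}(-dap)$, there are constants $C>0$ and $\alpha>0$ such that $\|\exp_{p\B p}(-t\,dap)\|\le Ce^{-\alpha t}$ for all $t\ge0$; therefore $t\mapsto\exp(-tda)\,d$ is absolutely integrable on $[0,\infty)$.

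It remains to compute $\int_0^\infty\exp_{p\B p}(-t\,dap)\,\mathrm{d}t$ in $p\B p$. For $s\ge0$,
$$
\frac{\mathrm{d}}{\mathrm{d}t}\bigl(-w\exp_{p\B p}(-t\,dap)\bigr)=w\,dap\,\exp_{p\B p}(-t\,dap)=\exp_{p\B p}(-t\,dap),
$$
using $w\,dap=p$, the unit of $p\B p$, and that $dap$ commutes with its exponential. Integrating from $0$ to $s$,
$$
\int_0^s\exp_{p\B p}(-t\,dap)\,\mathrm{d}t=w\bigl(p-\exp_{p\B p}(-s\,dap)\bigr)\xrightarrow[s\to\infty]{}w,
$$
since $\|\exp_{p\B p}(-s\,dap)\|\to0$. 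Consequently
$$
\int_0^\infty\exp(-tda)\,d\,\mathrm{d}t=\Bigl(\int_0^\infty\exp_{p\B p}(-t\,dap)\,\mathrm{d}t\Bigr)d=wd=a^{\parallel d}.
$$
\end{proof}
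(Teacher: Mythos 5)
Your proof is correct and follows essentially the same route as the paper: reduce via the representation of Lemma \ref{lemarep} to the corner algebra $p\B p$, show $\exp(-tda)\,d=\exp_{p\B p}(-t\,dap)\,d$, note $\sigma_{p\B p}(dap)$ lies in the open right half-plane, and apply the integral formula for the inverse in $p\B p$ to get $wd=a^{\parallel d}$. The only difference is that you also prove the standard identity $\int_0^\infty\exp(-tx)\,\mathrm{d}t=x^{-1}$ and the exponential decay explicitly, whereas the paper simply cites this as a known fact.
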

\begin{proof}
It is known that if the non-zero spectrum of $x \in \B$ lies in the open right half 
of the complex plane, then
\begin{equation}\label{integral}
x^{-1} = \int_0^\infty \exp(-tx) {\rm d}t.
\end{equation}
Let $d^-$ be any inner inverse of $d$ 
and $p=dd^-$. Then,
\begin{equation*}
\exp(-tda) d = \sum_{k=0}^\infty \frac{(-tda)^k}{k!} d
= \sum_{k=0}^\infty \frac{(-t)^k (da)^k d}{k!}.
\end{equation*}
Now, according to Lemma \ref{lemarep}, there exists a sequence  $(\xi_k)_{k\in\mathbb N}\subset p\B (1-p)$, such that
$$
(da)^k d = \mmm{(dap)^k}{\xi_k}{0}{0} 
\mmm{dp}{d(1-p)}{0}{0} = \mmm{(dap)^kdp}{(dap)^kd(1-p)}{0}{0}
= (dap)^k d.
$$
Thus,
$\exp(-tda) d = \exp(-tdap)d$

The representation of $da$ given in Lemma \ref{lemarep}
implies that $\sigma (da) = \sigma_{p \A p}(dap) \cup \{ 0 \}$,
where $\sigma_{p \A p}(dap)$ stands for the spectrum of $dap$
in the Banach algebra $p\B p$.
The hypothesis on $\sigma(da)$ and the invertibility
of $dap$ in $p \B p$ (\cite[Theorem 3.1]{bb}), implies that 
$\sigma_{p \A p}(dap)\subset 
\{ z \in \mathbb{C} : {\rm Re }(z) > 0 \}$.
In particular, according to (\ref{integral}) and using $x=dap$ in the subalgebra $p \B p$, 
$$
\int_0^\infty \exp(-tdap) {\rm d}t = (dap)^{-1}_{p \A p}.$$

However, according to \cite[Theorem 3.1]{bb},
$$
\int_0^\infty \exp(-tda) d {\rm d}t = \left( \int_0^\infty 
 \exp(-tdap) {\rm d}t  \right) d = (dap)^{-1}_{p \A p} d = 
a^{\parallel d}.
$$
\end{proof}

\begin{rema}\rm As it has been done before, applying Remark \ref{remark3000} to 
$(\B, +, \diamond)$ and Theorem \ref{thm7000}, the following statement can be derived 
using a symmetric argument: Let $\B$ be a Banach algebra and consider $a\in \B$ and $d\in\widehat{\B}$
such that $a$ is invertible along $d$ and $\sigma (ad) \setminus \{0 \} \subset 
\{ z \in \mathbb{C} : {\rm Re }(z) > 0 \}$. Then
$$
\int_0^\infty d \exp(-tad) \  \ {\rm d}t = a^{\parallel d}.
$$
\end{rema}

\section{The continuity of the inverse along an element}

To prove the continuity of the inverse 
along an element, first  the following Lemma need to be proved.

\begin{lem}\label{bound}
Let $\B$ be a unitary Banach algebra and consider $a,b,d,e \in \B$ such that $a$ is invertible
along $d$ and $b$ is invertible along $e$. Let $d^-$ be an inner inverse of $d$ and
$e^-$ be a inner inverse of $e$.
Then
$$
b^{\parallel e} - a^{\parallel d} = 
b^{\parallel e}e^-(e-d)(1-a a^{\parallel d}) +
(1-b^{\parallel e}b)(e-d)d^-a^{\parallel d} + b^{\parallel e}(a-b)a^{\parallel d}.
$$
\end{lem}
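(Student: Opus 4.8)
The identity is a "resolvent-type" formula, so the natural strategy is to express everything in terms of the three fundamental properties of the inverse along an element and then telescope. Recall from the excerpt that if $a$ is invertible along $d$ with $c:=a^{\parallel d}$, then $c$ is an outer inverse of $a$ ($cac=c$), $c\R=d\R$, $\R c=\R d$, and moreover by \eqref{bad} we have $cad=d=dac$; combined with $d=dd^-d$ this gives $cad d^-=dd^-$, i.e. $cadd^-$ is exactly the idempotent $dd^-$, and dually $d^-dac=d^-d$. The key consequences I would record first are: $cd^-\cdot d = c\,(d^-d)$ — no wait, more useful is $a^{\parallel d}d^- $ acting on the right by $d$: since $a^{\parallel d}=wd$ in the notation of Lemma \ref{lemarep} one has $a^{\parallel d}d^-=w$, but I will avoid matrix representations and instead use only: $a^{\parallel d}ad=d$, $daa^{\parallel d}=d$, and hence $a^{\parallel d}a\,(dd^-)=dd^-$ and $(d^-d)\,aa^{\parallel d}=d^-d$. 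Similarly for $b,e$: $b^{\parallel e}be e^-=ee^-$ and $e^-e\,bb^{\parallel e}=e^-e$.

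First I would verify the two "absorption" identities that make the formula work, namely
$$
b^{\parallel e}e^-\,e = b^{\parallel e}, \qquad d\,d^-a^{\parallel d} = a^{\parallel d}.
$$
For the second: $a^{\parallel d}\in d\R$ so $a^{\parallel d}=dx$ for some $x$, hence $dd^-a^{\parallel d}=dd^-dx=dx=a^{\parallel d}$; the first is symmetric using $b^{\parallel e}\in\R e$. With these in hand, I would start from the right-hand side and expand. Write $R$ for the right-hand side. Using $b^{\parallel e}e^-e=b^{\parallel e}$ in the first term and $dd^-a^{\parallel d}=a^{\parallel d}$ in the second term,
$$
R = b^{\parallel e}(1-aa^{\parallel d}) - b^{\parallel e}e^-d(1-aa^{\parallel d}) + (1-b^{\parallel e}b)a^{\parallel d} - (1-b^{\parallel e}b)e^-d\,d^-a^{\parallel d} + b^{\parallel e}(a-b)a^{\parallel d}.
$$
Wait — I must be careful: $e^-e$ cannot simply be dropped because $e^-(e-d)$ appears, not $e^-e$. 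Let me instead expand $e^-(e-d)=e^-e-e^-d$ and $(e-d)d^-=ed^--dd^-$ directly and then regroup. The terms $b^{\parallel e}e^-e(1-aa^{\parallel d})=b^{\parallel e}(1-aa^{\parallel d})=b^{\parallel e}-b^{\parallel e}aa^{\parallel d}$ and $(1-b^{\parallel e}b)dd^-a^{\parallel d}=(1-b^{\parallel e}b)a^{\parallel d}=a^{\parallel d}-b^{\parallel e}ba^{\parallel d}$ combine with $b^{\parallel e}(a-b)a^{\parallel d}=b^{\parallel e}aa^{\parallel d}-b^{\parallel e}ba^{\parallel d}$; collecting, the $b^{\parallel e}aa^{\parallel d}$ cancels, leaving $b^{\parallel e}+a^{\parallel d}-2b^{\parallel e}ba^{\parallel d}$ plus the leftover cross-terms $-b^{\parallel e}e^-d(1-aa^{\parallel d}) - (1-b^{\parallel e}b)ed^-a^{\parallel d}$... hmm, the sign structure is not immediately collapsing, so the real work is bookkeeping.

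The honest plan, therefore, is: expand $R$ fully into its monomials, then use the identities $b^{\parallel e}e^-e=b^{\parallel e}$, $dd^-a^{\parallel d}=a^{\parallel d}$, $b^{\parallel e}be e^-=e e^-$... no, rather $b^{\parallel e}b\cdot b^{\parallel e}=b^{\parallel e}$ and $a^{\parallel d}\cdot aa^{\parallel d}=a^{\parallel d}$, together with the crucial \eqref{bad}-type facts $b^{\parallel e}e^-\cdot d$ — here is where $b^{\parallel e}b e=e$... I realize the cleanest route is the "sandwich" trick: show $R = b^{\parallel e}(\,(e-d)+(a-b)+\text{stuff}\,)a^{\parallel d}$-style by inserting $b^{\parallel e}b e=e$ and $d aa^{\parallel d}=d$... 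Concretely, I expect the decisive step to be rewriting $b^{\parallel e}e^-(e-d)$ as $b^{\parallel e}-b^{\parallel e}e^-d$ and then using $b^{\parallel e}e^-d = b^{\parallel e}e^-d\,aa^{\parallel d}$ is false in general, so instead $b^{\parallel e}=b^{\parallel e}be\,e^-\cdot(\dots)$. The main obstacle is precisely this: matching $b^{\parallel e}e^-d$ against $b^{\parallel e}b\,d^-d\,a^{\parallel d}$-type terms requires inserting idempotents $aa^{\parallel d}$ and $bb^{\parallel e}$ in the right places, and the non-commutativity means one has to be meticulous about whether an idempotent goes on the left or the right. I would handle it by proving, as a preliminary, the four identities
$$
a^{\parallel d}a\,d = d,\quad d\,aa^{\parallel d}=d,\quad b^{\parallel e}b\,e=e,\quad e\,bb^{\parallel e}=e
$$
(from \eqref{bad}), then the four "one-sided unit" identities above, and finally expand $R$ and cancel; each of the resulting $\sim 8$ monomials should reduce, via one insertion of the appropriate idempotent and one application of \eqref{bad}, either to $b^{\parallel e}$, to $-a^{\parallel d}$, or to a pair that telescopes, leaving exactly $b^{\parallel e}-a^{\parallel d}$.

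Given the routine but delicate nature, in the actual write-up I would instead present the slicker derivation: start from the algebraic identity
$$
b^{\parallel e}-a^{\parallel d} = b^{\parallel e}(aa^{\parallel d}-1) + (1-b^{\parallel e}b)a^{\parallel d} + b^{\parallel e}(b-a)a^{\parallel d} + b^{\parallel e}ba^{\parallel d} - b^{\parallel e}aa^{\parallel d} + b^{\parallel e}- \dots
$$
— no. The clean version is: $b^{\parallel e}-a^{\parallel d}=b^{\parallel e}(1-aa^{\parallel d})-(1-b^{\parallel e}b)a^{\parallel d}+b^{\parallel e}(a-b)a^{\parallel d}$ is the standard three-term resolvent identity for outer inverses (verified by expanding: RHS $=b^{\parallel e}-b^{\parallel e}aa^{\parallel d}-a^{\parallel d}+b^{\parallel e}ba^{\parallel d}+b^{\parallel e}aa^{\parallel d}-b^{\parallel e}ba^{\parallel d}=b^{\parallel e}-a^{\parallel d}$, using nothing). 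Then I only need $1-aa^{\parallel d}=e^-(e-d)(1-aa^{\parallel d})$... that is false; rather $b^{\parallel e}(1-aa^{\parallel d})=b^{\parallel e}e^-(e-d)(1-aa^{\parallel d})$ and $(1-b^{\parallel e}b)a^{\parallel d}=-(1-b^{\parallel e}b)(e-d)d^-a^{\parallel d}$. For the first: since $b^{\parallel e}=b^{\parallel e}ee^-$ (as $b^{\parallel e}\in\R e$) and $d(1-aa^{\parallel d})=0$ by \eqref{bad}, we get $b^{\parallel e}e^-(e-d)(1-aa^{\parallel d})=b^{\parallel e}e^-e(1-aa^{\parallel d})-b^{\parallel e}e^-d(1-aa^{\parallel d})=b^{\parallel e}(1-aa^{\parallel d})-b^{\parallel e}e^-\cdot d(1-aa^{\parallel d})=b^{\parallel e}(1-aa^{\parallel d})$. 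For the second, symmetrically: $a^{\parallel d}=dd^-a^{\parallel d}$ and $(1-b^{\parallel e}b)e=0$ by \eqref{bad} (since $\R b^{\parallel e}=\R e$ gives $b^{\parallel e}be=e$), so $(1-b^{\parallel e}b)(e-d)d^-a^{\parallel d}=(1-b^{\parallel e}b)e d^-a^{\parallel d}-(1-b^{\parallel e}b)dd^-a^{\parallel d}=0-(1-b^{\parallel e}b)a^{\parallel d}$. Substituting these two into the three-term identity yields exactly the claimed formula. The only genuine subtlety is justifying $d(1-aa^{\parallel d})=0$ and $(1-b^{\parallel e}b)e=0$, both of which are immediate from \eqref{bad}; everything else is one-line bookkeeping.
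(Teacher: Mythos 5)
Your final argument is correct and is essentially the paper's own proof: both rest on the identities $b^{\parallel e}e^-e=b^{\parallel e}$, $dd^-a^{\parallel d}=a^{\parallel d}$, $daa^{\parallel d}=d$ and $b^{\parallel e}be=e$ (from (\ref{bad}) and the ideal equalities), and on the same cancellation of the two cross terms; the paper simply evaluates the first two terms of the right-hand side as $b^{\parallel e}-b^{\parallel e}aa^{\parallel d}$ and $b^{\parallel e}ba^{\parallel d}-a^{\parallel d}$ and adds, rather than substituting into your trivial three-term identity. (Only note the slip ``$b^{\parallel e}=b^{\parallel e}ee^-$'', which should read $b^{\parallel e}=b^{\parallel e}e^-e$ --- the identity your computation actually uses.)
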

\begin{proof}
From (\ref{bad}) and \cite[Theorem 3.1]{bb}, 
$b^{\parallel e}be = e$ and $dd^-a^{\parallel d} = a^{\parallel d}$.
Therefore, 
\begin{equation*}
\begin{split}
(1-b^{\parallel e}b)(e-d)d^-a^{\parallel d} & = 
\left[ (1-b^{\parallel e}b) e - (1-b^{\parallel e}b) d\right]d^- a^{\parallel d} \\
& = -(1-b^{\parallel e}b)dd^-a^{\parallel d} \\
& = b^{\parallel e}b a^{\parallel d}-a^{\parallel d}.
\end{split}
\end{equation*}
In a similar way it is possible to prove that
$$
b^{\parallel e}e^-(e-d)(1-a a^{\parallel d}) = b^{\parallel e} - b^{\parallel e}aa^{\parallel d}.
$$
Therefore, 
\begin{equation*}
\begin{split}
b^{\parallel e} - a^{\parallel d} & = 
b^{\parallel e}e^-(e-d)(1-a a^{\parallel d}) + b^{\parallel e}aa^{\parallel d}
+ (1-b^{\parallel e}b)(e-d)d^-a^{\parallel d} - b^{\parallel e}b a^{\parallel d} \\
& = b^{\parallel e}e^-(e-d)(1-a a^{\parallel d}) +
(1-b^{\parallel e}b)(e-d)d^-a^{\parallel d} + b^{\parallel e}(a-b)a^{\parallel d}.
\end{split}
\end{equation*}
\end{proof}

The next result deals with the continuity of the invertibility along an element.
For the special cases of the group inverse, the Drazin inverse and the Moore-Penrose
inverse, see \cite{koliha, RS}.

\begin{thm}Let $\B$ be a Banach algebra and consider two sequences $(a_n)_{n\in\mathbb N}\subset \B$ and $(d_n)_{n\in\mathbb N}\subset\B$ converging to 
$a$ and $d$, respectively. Suppose that $a$ is invertible along $d$ and $a_n$ is invertible along $d_n$, for each $n\in\mathbb N$.
Let $d_n^-$ be an inner inverse of $d_n$,  $n\in\mathbb N$. Then, the following statements hold.
\begin{itemize}
\item[{\rm (i)}] If $a_m^{\parallel d_m}$ converges to $a^{\parallel d}$, then $a_m^{\parallel d_m}$ is a bounded sequence.
\item[{\rm (ii)}] If $a_m^{\parallel d_m}$ is a bounded sequence and $\sup_n \| d_n^- \|<\infty$, then $a_m^{\parallel d_m}\to a^{\parallel d}$ .
\end{itemize}
\end{thm}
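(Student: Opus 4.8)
The plan is to dispose of statement (i) in one line and then derive statement (ii) from the perturbation identity of Lemma \ref{bound}. For (i) there is essentially nothing to do: a convergent sequence in the normed space $\B$ is automatically bounded, so if $a_m^{\parallel d_m}\to a^{\parallel d}$ then $(a_m^{\parallel d_m})_m$ is bounded.

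For (ii) the idea is to invoke Lemma \ref{bound} with $b=a_n$, $e=d_n$ and $e^-=d_n^-$, keeping $a$, $d$ and one fixed inner inverse $d^-$ of $d$ constant. This yields, for every $n$,
\[
a_n^{\parallel d_n}-a^{\parallel d}= a_n^{\parallel d_n}d_n^-(d_n-d)(1-aa^{\parallel d}) +(1-a_n^{\parallel d_n}a_n)(d_n-d)d^-a^{\parallel d} +a_n^{\parallel d_n}(a-a_n)a^{\parallel d}.
\]
Taking norms and using submultiplicativity bounds $\|a_n^{\parallel d_n}-a^{\parallel d}\|$ by a sum of three products; each product contains either the factor $\|d_n-d\|$ or the factor $\|a-a_n\|$, both of which tend to $0$ by hypothesis. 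Hence it only remains to check that the other factors in each product stay bounded in $n$, for then every term tends to $0$ and therefore $a_n^{\parallel d_n}\to a^{\parallel d}$.

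The quantities $\|a^{\parallel d}\|$, $\|d^-\|$ and $\|1-aa^{\parallel d}\|$ are fixed constants; $\|a_n^{\parallel d_n}\|$ is bounded by the standing hypothesis of (ii); and $\|d_n^-\|$ is bounded precisely because $\sup_n\|d_n^-\|<\infty$, which is exactly where that assumption is used. For the middle product, one estimates $\|1-a_n^{\parallel d_n}a_n\|\le 1+\|a_n^{\parallel d_n}\|\,\|a_n\|$, and here $(a_n)_n$ is bounded (being convergent) while $(a_n^{\parallel d_n})_n$ is bounded by hypothesis, so this factor is bounded as well. Assembling these observations finishes the proof.

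There is no genuine obstacle once Lemma \ref{bound} is available; the only point requiring a little care is the uniform control of $\|1-a_n^{\parallel d_n}a_n\|$, for which one needs both the assumed boundedness of $(a_n^{\parallel d_n})_n$ and the automatic boundedness of $(a_n)_n$. It is also worth being attentive to the placement of $e^-$ versus $d^-$ in Lemma \ref{bound}, so that the inner inverses $d_n^-$ that we can bound uniformly appear alongside $a_n^{\parallel d_n}$, while the fixed $d^-$ (over which we have no uniform control, and need none) appears only next to the constant $a^{\parallel d}$.
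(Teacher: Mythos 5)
Your proposal is correct and follows essentially the same route as the paper: statement (i) is immediate since convergent sequences are bounded, and statement (ii) is obtained by applying Lemma \ref{bound} with $b=a_n$, $e=d_n$, $e^-=d_n^-$ and a fixed inner inverse $d^-$ of $d$, then estimating the three terms. Your write-up simply fills in the norm estimates (including the bound $\|1-a_n^{\parallel d_n}a_n\|\le 1+\|a_n^{\parallel d_n}\|\,\|a_n\|$ and the use of $\sup_n\|d_n^-\|<\infty$) that the paper leaves implicit in its one-line proof.
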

\begin{proof}Statement (i) is obvious. 
\noindent To prove statement (ii), apply Lemma \ref{bound}.
\end{proof}

\bigskip
\noindent Julio Ben\'{\i}tez\par
\noindent E-mail address: jbenitez@mat.upv.es \par
\medskip
\noindent Enrico Boasso\par
\noindent E-mail address: enrico\_odisseo@yahoo.it
\end{document}